\numberwithin{figure}{section}
\def\R{{\mathbb R}}
\def\C{{\mathbb C}}
\def\T{{\mathbb T}}
\def\Z{{\mathbb Z}}
\def\la{\langle}
\def\ra{\rangle}
\def\s{\vskip 0.25cm\noindent}
\def\e{\varepsilon}
\def\phe{\varphi}
\def\build#1_#2^#3{\mathrel{
\mathop{\kern 0pt#1}\limits_{#2}^{#3}}}
\def\td_#1,#2{\mathrel{\mathop{\build\longrightarrow_{#1\rightarrow #2}^{}}}}
\DeclareFontFamily{U}{MnSymbolC}{}
\DeclareSymbolFont{MnSyC}{U}{MnSymbolC}{m}{n}
\DeclareFontShape{U}{MnSymbolC}{m}{n}{
    <-6>  MnSymbolC5
   <6-7>  MnSymbolC6
   <7-8>  MnSymbolC7
   <8-9>  MnSymbolC8
   <9-10> MnSymbolC9
  <10-12> MnSymbolC10
  <12->   MnSymbolC12}{}
\DeclareMathSymbol{\intprod}{\mathbin}{MnSyC}{'270}
\newtheorem{theorem}{Theorem}
\newtheorem{corollary}{Corollary}
\newtheorem{lemma}{Lemma}
\newtheorem{remark}{Remark}
\begin{document}

\title[The zero dispersion limit for Benjamin--Ono]{The zero dispersion limit for the Benjamin--Ono equation on the  line}
\author[P. G\'erard]{Patrick G\'erard}
\address{Universit\'e Paris--Saclay, Laboratoire de Math\'ematiques d'Orsay,  CNRS,   UMR 8628,  91405 Orsay, France} \email{{\tt patrick.gerard@universite-paris-saclay.fr}}

\subjclass[2010]{ 37K15 primary, 47B35 secondary}

\date{July 24,  2023}

\begin{abstract}
We identify the zero dispersion limit of a solution of the  Benjamin--Ono equation on the line corresponding to every initial datum in $L^2(\R)\cap L^\infty(\R )$. We  infer a maximum principle and a local smoothing property for this limit. The proof is based on an explicit formula for the Benjamin--Ono equation and on the combination of  calculations in the special case of rational initial data, with approximation arguments. We also investigate the special case of an initial datum equal to the characteristic function of a finite interval, and prove the lack of semigroup property for this zero dispersion limit.
\end{abstract}

\keywords{Benjamin--Ono equation, explicit formula, zero dispersion limit, Burgers turbulence, Lax pair, Toeplitz operators}

\thanks{ It is a pleasure to thank R.~Badreddine, D.~Bambusi,  Y.~Brenier, M.~Gallone, L.~Gassot, H.~Holden,  E.~Lenzmann, P.~Miller and N.~Tzvetkov for fruitful discussions about this problem.}

\maketitle


\medskip

\section{Introduction}

We consider the Benjamin--Ono equation on the line with a small dispersion $\e >0$,
\begin{equation}\label{BOeps}
\partial_tu^\e +\partial_x[(u^\e )^2]=\e \partial_x|D| u^\e \ ,\ u^\e (0,x)=u_0(x)\ ,
\end{equation}
where $u_0\in L^2(\R )$ and is real valued. It is well known \cite{IK07, MP12}
that equation \eqref{BOeps} has a global solution $u^\e \in C(\R, L^2(\R ))$, which is characterized as the strong limit of smooth solutions
with any initial data approximating the initial datum $u_0$ strongly in $L^2(\R )$, and that the $L^2$ norm $\Vert u^\e (t)\Vert_{L^2(\R )}$ is independent of the time variable $t$. \\
Our purpose is the description of the weak limit of $u^\e (t)$ in $L^2(\R )$ as $\e \to 0$.
Such a problem is delicate even for smooth data $u_0$, because  the formal limit 
of equation \eqref{BOeps} as $\e $ tends to $0$ is the inviscid Burgers--Hopf equation \cite{Bu48, Ho50}
$$\partial_tu+\partial_x(u^2)=0\ ,$$
which is well known to display singularities in finite time. In fact, numerical simulations --- see e.g. \cite{F13} --- show that, near such singularities, the solution $u^\e $ 
display very strong oscillations as $\e $ tends to $0$, so that the weak limit of $u^\e $, if it exists, is very difficult to identify.\\
This zero--dispersion limit problem was first addressed by Lax and Levermore \cite{LL83} in the case of the Korteweg--de Vries equation,
\begin{equation}\label{KdV}
\partial_tv^\e +\partial_x[(v^\e)^2]=\e \partial_x^3v^\e\ ,\ v^\e (0,x)=v_0(x)\ ,
\end{equation}
using the integrability of this equation and the inverse scattering transform for the Lax operator. Lax--Levermore's pioneering work initiated  a series of remarkable results describing  $v^\e $  for special initial data $v_0$, under various assumptions on the spectral theory of the corresponding  Lax operator, see in particular  \cite{V85, DVZ97, GK07, CG09} and references therein.
All these references have in common the use of the inverse scattering theory and the study of a related Riemann--Hilbert problem, for describing the solution $v^\e $, and consequently depend on strong assumptions on the initial data through the spectral theory of its Lax operator --- here, a Schr\"odinger operator.\\
Let us also mention recent approaches to similar problems for the Fermi--Pasta--Ulam chain \cite{GMPR22} on a wide range of time and energy scales.  We refer to \cite{GP22} for a connection to the zero--dispersion limit for the KdV equation \eqref{KdV}. \\
In the case of the Benjamin--Ono equation \eqref{BOeps} on the line, much less is known, with the notable exceptions of \cite{MX11, MW16}. Again,  these references use the inverse scattering theory for the Lax operator inherited from the integrability of the Benjamin--Ono equation.
The case of the Benjamin--Ono equation with periodic boundary conditions was studied in  \cite{Ga21, Ga23} where the inverse spectral theory of \cite{GK21} was used to establish an explicit description of the zero dispersion limit in the case of bell--shaped data. Furthermore, in \cite{Ga23}, an explicit formula proved in \cite{G23} was used to prove the existence of a zero--dispersion limit for every initial datum in $L^\infty (\T )$. This suggests that the results of \cite{G23} could also be used for the zero--dispersion limit on the line. This is the purpose of this paper, where we prove general results on the zero--dispersion limit of \eqref{BOeps} without using the inverse spectral theory of the Lax operator.
\subsection{Statement of the results}
Our first main result is the following.
\begin{theorem}\label{main}
As $\e \to 0$,  the solution $u^\e (t)$ of \eqref{BOeps} converges weakly in $L^2(\R )$ to some function $ZD[u_0](t)$ for every  $t\in \R$.
The mapping $$u_0\in L^\infty(\R )\cap L^2(\R )\mapsto ZD[u_0]$$ has the following properties.
\begin{enumerate}
\item For every $t\in \R$, $\Vert ZD[u_0](t)\Vert _{L^2}\leq \Vert u_0\Vert _{L^2}$,  and $ZD[u_0]$ is continuous on $\R $ with values in $L^2(\R )$ endowed with the weak topology.
\item If $u_0^\delta $ converges strongly to $u_0$ in $L^2(\R )$ as $\delta \to 0$, with a uniform bound on $\Vert u_0^\delta \Vert_{L^\infty}$, then, for every $t\in \R$,
$ZD[u_0^\delta ](t)$ converges to $ZD[u_0](t)$ weakly in $L^2(\R )$.
\end{enumerate}
\end{theorem}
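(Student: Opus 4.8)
The plan is to rely on the explicit formula of \cite{G23}, rescaled to the small--dispersion setting. Writing $D=-i\pa_x$ and letting $\Pi$ denote the Szeg\H{o} projector onto the Hardy space $L^2_+(\R)$, the relevant object is the \emph{semiclassical Lax operator} $L_\e=\e D-T_{u_0}$, where $T_{u_0}=\Pi M_{u_0}\Pi$ is the Toeplitz operator of symbol $u_0$; since $u_0\in L^2\cap L^\infty$, the operator $T_{u_0}$ is bounded and $L_\e$ is self--adjoint. The explicit formula expresses, for a test function $h$ in a dense subspace of $L^2(\R)$, the pairing $\langle u^\e(t),h\rangle$ as a combination of inner products built from the resolvent $(L_\e-z)^{-1}$, the generator $D$, and $u_0$ itself. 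Since $\Vert u^\e(t)\Vert_{L^2}=\Vert u_0\Vert_{L^2}$ is conserved, the family $\{u^\e(t)\}_{\e>0}$ is bounded in $L^2(\R)$ and every weak limit point satisfies the bound in (1); the whole problem is therefore to show that $\langle u^\e(t),h\rangle$ has a \emph{unique} limit as $\e\to0$, for $h$ ranging over a dense family.

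First I would treat the case of rational initial data $u_0$. For such data the negative spectral subspace of $L_\e$, or equivalently the range of the finite--rank part governing $T_{u_0}$, is finite--dimensional, and the explicit formula collapses to an expression involving finite matrices depending on $\e$. One can then compute $\lim_{\e\to0}\langle u^\e(t),h\rangle$ by a direct semiclassical analysis of these matrices, obtaining the existence of $ZD[u_0](t)$ together with the bound $\Vert ZD[u_0](t)\Vert_{L^2}\le\Vert u_0\Vert_{L^2}$ and the smoothness in $t$ of the limiting pairing in this special case. This is the computational core of the argument.

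The general case is reached by approximation. Given $u_0\in L^\infty\cap L^2$, choose rational functions $u_0^\delta\to u_0$ in $L^2(\R)$ with $\sup_\delta\Vert u_0^\delta\Vert_{L^\infty}<\infty$, and write $u^{\e,\delta}$ for the solution of \eqref{BOeps} with datum $u_0^\delta$. The decisive ingredient is a continuous dependence estimate that is \emph{uniform in $\e$}: for the solutions $u^\e,v^\e$ of \eqref{BOeps} with data $u_0,v_0$ of $L^\infty$ norm at most $M$, and for each fixed $t$ and test function $h$,
\begin{equation*}
\big|\langle u^\e(t),h\rangle-\langle v^\e(t),h\rangle\big|\le C\bigl(t,\Vert h\Vert_{L^2},M\bigr)\,\Vert u_0-v_0\Vert_{L^2},
\end{equation*}
with $C$ independent of $\e$. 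Granting this, the two limits commute:
\begin{equation*}
\lim_{\e\to0}\langle u^\e(t),h\rangle=\lim_{\delta\to0}\lim_{\e\to0}\langle u^{\e,\delta}(t),h\rangle=\lim_{\delta\to0}\langle ZD[u_0^\delta](t),h\rangle,
\end{equation*}
the right--hand limit existing by the Cauchy criterion and being independent of the approximating sequence; this defines $ZD[u_0](t)$ and yields at once the stability statement (2). The bound in (1) passes to the limit by weak lower semicontinuity of the $L^2$ norm, while weak continuity in $t$ follows from equicontinuity in $\e$ of $t\mapsto\langle u^\e(t),h\rangle$, itself read off the explicit formula, combined with the uniform $L^2$ bound and density.

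The main obstacle is the uniform--in--$\e$ continuous dependence estimate. The operator $L_\e=\e D-T_{u_0}$ is a singular perturbation as $\e\to0$, so the naive bound $\Vert(L_\e-z)^{-1}\Vert\le 1/|\mathrm{Im}\,z|$, although uniform in $\e$, retains no smallness, and straightforward operator--norm estimates degenerate near the real axis, where the spectral data concentrate. The point will be to exploit the algebraic structure of the explicit formula --- the cancellations coming from the commutator structure of the Lax pair and the presence of the factor $D$ --- together with Hilbert--Schmidt or trace--class control of the relevant operators, so that the dependence on the datum enters only through $\Vert u_0-v_0\Vert_{L^2}$, with constants controlled by the $L^\infty$ bound $M$ and uniform as $\e\to0$.
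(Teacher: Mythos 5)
There is a genuine gap, and it sits exactly where you locate it yourself: the uniform--in--$\e$ continuous dependence estimate is not proved, and nothing in your proposal comes close to producing it. Note first how strong that estimate is: since your constant depends on $h$ only through $\Vert h\Vert_{L^2}$, taking the supremum over $\Vert h\Vert_{L^2}\leq 1$ shows it is \emph{equivalent} to Lipschitz continuity of the flow map of \eqref{BOeps} from $L^2(\R)\cap\{\Vert\cdot\Vert_{L^\infty}\leq M\}$ into $L^2(\R)$, uniformly in $\e$. This is far stronger than the conclusion you are trying to prove (Theorem \ref{main}(2) asserts only weak sequential continuity in the datum), it is not known even for a single fixed $\e$ (the Benjamin--Ono flow map is notoriously not even uniformly continuous on bounded sets in Sobolev spaces, because of amplitude--dependent transport), and uniformly in $\e$ it would have to survive the passage to the dispersionless limit, where for conservation--law dynamics Lipschitz $L^2$--dependence with constants depending only on $\Vert u_0\Vert_{L^\infty}$ is precisely the kind of statement that fails after characteristics cross. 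A weaker, non--quantitative equicontinuity in $\delta$, uniform in $\e$, would suffice for your commuting--limits scheme, but you do not establish that either; invoking ``cancellations coming from the commutator structure'' and ``Hilbert--Schmidt or trace--class control'' is a hope, not an argument. In effect you have reduced the theorem to a claim that is at least as hard as the theorem, and plausibly false as stated. A second, related, problem is the rational--data step: carrying out a semiclassical analysis of \eqref{BOeps} at fixed $\e$ for rational data is essentially the (hard) inverse--scattering program of Miller--Xu and Miller--Wetzel, and your proposal gives no indication of how the ``finite matrices depending on $\e$'' are to be controlled as $\e\to 0$.

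The paper's proof avoids both difficulties, and the reason is a structural point that your proposal misses. The explicit formula of \cite{G23} does not express $u^\e(t)$ through the resolvent $(L_\e-z)^{-1}$ of the Lax operator $L_\e=\e D-T_{u_0}$; it expresses $\Pi u^\e(t,x)$, for $\mathrm{Im}(x)>0$, through the resolvent of the operator $G-2tL_\e=G-2\e tD+2tT_{u_0}$, where $G$ is the adjoint of multiplication by $x$ on $L^2_+(\R)$:
\begin{equation*}
\Pi u^\e (t,x)=\frac{1}{2i\pi} I_+\bigl[(G-2\e tD+2t T_{u_0} -x\,{\rm Id})^{-1}(\Pi u_0)\bigr]\ .
\end{equation*}
In this formulation the limit $\e\to 0$ is a \emph{regular} perturbation problem, not a semiclassical one: for $\mathrm{Im}(x)>2|t|\Vert u_0\Vert_{L^\infty}$ the resolvent of $G-2\e tD$ converges strongly to that of $G$ (this is an explicit one--line computation on the Fourier side), so by bounded perturbation theory the resolvent of $G-2\e tD+2tT_{u_0}$ converges strongly to that of $G+2tT_{u_0}$; a uniform $H^1(]0,1[)$ bound on $\widehat{f_\e}$ then allows passage to the limit in the functional $I_+$. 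Since $f\mapsto \Pi f(x)$ is a bounded linear functional on $L^2(\R)$ for each $x$ in the upper half--plane, and since $\Vert u^\e(t)\Vert_{L^2}=\Vert u_0\Vert_{L^2}$ gives weak compactness, every weak limit point of $u^\e(t)$ is pinned down by these pointwise limits (extended by analytic continuation in $x$), so the full family converges weakly to the function defined by \eqref{ZDexplicit} --- for \emph{every} $u_0\in L^\infty(\R)\cap L^2(\R)$ directly, with no rational approximation. Properties (1) and (2) are then read off the limiting formula \eqref{ZDexplicit} by the same resolvent--convergence argument: in the paper they are consequences of the limit, whereas in your proposal a much stronger quantitative version of (2) is the engine that is supposed to produce the limit. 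Rational data enter the paper only afterwards, in the proof of Theorem \ref{state:somalt}, and only at $\e=0$, where the explicit formula reduces to a finite linear system.
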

The above theorem is already quite surprising compared to what is known for the KdV equation. Indeed, as far as we know, the existence of a weak limit for every time and for every initial datum $v_0\in L^\infty (\R )\cap L^2(\R )$ is an open problem for equation \eqref{KdV}. 
Coming back to the Benjamin--Ono equation, recall that a similar result was proved on the torus in \cite{Ga23}. In fact, on the line, one can go much further in the description of the zero dispersion limit. Under a slight additional assumption on the regularity of $u_0$ , we can relate the zero dispersion limit $ZD[u_0]$ to the multivalued solution of the inviscid Burgers--Hopf equation with initial datum $u_0$.
\begin{theorem}\label{state:somalt}
In the conditions of Theorem \ref{main}, assume moreover that $u_0$ is a $C^1$  function tending to $0$ at infinity as well as its first derivative.  For every $t\in \R$, the set   $K_t(u_0)$ of critical values of the function $$y\in \R \mapsto y+2tu_0(y)$$ is a compact subset of measure $0$.  For every connected component $\Omega $ of $K_t(u_0)^c $, there exists a nonnegative integer $\ell $ such that, for every $x\in \Omega $,  the equation $$y+2tu_0(y)=x$$ has $2\ell +1$ simple real solutions 
$$y_0(t,x)<y_1(t,x)<\cdots <y_{2\ell}(t,x)\ ,$$ and the zero dispersion limit is given by
\begin{equation}\label{somalt}
ZD[u_0](t,x)=\sum_{k=0}^{2\ell} (-1)^k u_0(y_k(t,x))\ .
\end{equation}
\end{theorem}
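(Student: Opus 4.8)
The plan is to settle the elementary geometric claims about $K_t(u_0)$ first, then to establish formula \eqref{somalt} for rational data via the explicit formula of \cite{G23}, and finally to transfer it to general $C^1$ decaying data by approximation, invoking Theorem~\ref{main}. Throughout write $g(y)=g_t(y):=y+2tu_0(y)$. A point $y$ is critical iff $g'(y)=1+2tu_0'(y)=0$, i.e. $u_0'(y)=-1/(2t)$ (the case $t=0$ being trivial, since then $g=\mathrm{id}$ and $K_0(u_0)=\emptyset$). Because $u_0'\to 0$ at infinity, the critical set $\{g'=0\}$ lies in a bounded interval, hence is compact, and its image $K_t(u_0)$ is compact; since $g$ is a $C^1$ map $\R\to\R$, Sard's theorem yields $|K_t(u_0)|=0$. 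On a connected component $\Omega$ of $K_t(u_0)^c$ every $x$ is a regular value, so $g^{-1}(x)$ is finite with $g'\neq 0$ at each point; as $g(y)\sim y$ at infinity, $g$ is proper and surjective with $g(\pm\infty)=\pm\infty$, so the number of preimages is locally constant and odd, say $2\ell+1$. Ordering $y_0<\cdots<y_{2\ell}$, the signs of $g'$ alternate with $g'(y_k)>0$ for even $k$ and $g'(y_k)<0$ for odd $k$, so $(-1)^k=\mathrm{sign}\,g'(y_k)$ and \eqref{somalt} is the signed count $ZD[u_0](t,x)=\sum_{g(y)=x}\mathrm{sign}(g'(y))\,u_0(y)$.

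For the rational case I would fix a real rational datum $u_0\in L^2(\R)\cap L^\infty(\R)$ and use the explicit formula of \cite{G23}, which expresses $u^\e(t)$ through the spectral data of the Lax operator $L_{u_0}=\e D-T_{u_0}$ and its (semiclassical) time evolution. For rational data these operators are finite-rank perturbations of $\e D$, so $u^\e(t,x)$ reduces to an explicit finite determinant/resolvent carrying an oscillatory phase of the form $e^{i\theta/\e}$ with $\theta$ built from $g_t(y)-x$. As $\e\to 0$ a stationary-phase analysis localizes the contributions to the stationary points of the phase, which are exactly the solutions $y_k$ of $g_t(y)=x$, and the signature of the phase at each such point (the sign of $g'(y_k)$) produces the alternating sign; summation then yields \eqref{somalt} pointwise on $K_t(u_0)^c$.

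For the transfer I would approximate a general $C^1$ datum $u_0$, decaying with its derivative, by real rational data $u_0^\delta$ converging to $u_0$ strongly in $L^2(\R)$ and in $C^1_{\mathrm{loc}}(\R)$, with a uniform bound on $\Vert u_0^\delta\Vert_{L^\infty}$. By part (2) of Theorem~\ref{main}, $ZD[u_0^\delta](t)\to ZD[u_0](t)$ weakly in $L^2(\R)$. On any compact subset of a component $\Omega$ of $K_t(u_0)^c$, the $C^1_{\mathrm{loc}}$ convergence forces, for $\delta$ small, the branch count to stabilize at $2\ell+1$ and the branches $y_k^\delta(t,x)\to y_k(t,x)$ uniformly, so the alternating sums converge locally uniformly to the right-hand side of \eqref{somalt}. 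Since weak limits are unique, this identifies $ZD[u_0](t,\cdot)$ with \eqref{somalt} almost everywhere on $K_t(u_0)^c$, hence almost everywhere.

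The main obstacle is the rational computation together with its $\e\to 0$ asymptotics: extracting from the explicit formula a phase whose stationary points are precisely the branches $y_k$, tracking the signs through the signature at each stationary point, and controlling the stationary-phase remainder uniformly enough to pass to the limit. A secondary delicate point is the passage through the critical-value set in the approximation step, since branches may be created or destroyed near $K_t(u_0)$ as $\delta\to 0$; the locally uniform control must therefore be confined to compact subsets of $K_t(u_0)^c$, with the measure-zero property of $K_t(u_0)$ guaranteeing that this is enough for the almost-everywhere identification.
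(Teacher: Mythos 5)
Your overall skeleton --- elementary facts about $K_t(u_0)$ via Sard's theorem, an exact formula for rational data, then transfer to general $C^1$ data by rational approximation and part (2) of Theorem \ref{main} --- is exactly the paper's, and your first and third steps are sound (the paper likewise mollifies, approximates by rational functions, stabilizes the branch count on compact subintervals $\omega\subset\Omega$, and identifies the weak limit a.e.). The genuine gap is the middle step, which is the heart of the theorem and which you yourself flag as ``the main obstacle''. First, you mischaracterize the explicit formula of \cite{G23}: it is not a scattering-data/oscillatory-determinant representation but a resolvent formula, $\Pi u^\e(t,x)=\frac{1}{2i\pi}I_+[(G-2\e tD+2tT_{u_0}-x{\rm Id})^{-1}(\Pi u_0)]$ for $\mathrm{Im}(x)>0$, and for such complex $x$ the limit $\e\to 0$ involves no stationary phase at all: the resolvent converges strongly (the phase $\e t(\eta^2-\xi^2)$ in the explicit kernel simply tends to $0$), which is already part of the proof of Theorem \ref{main} and yields the $\e$-free formula \eqref{ZDexplicit}. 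Second, and more seriously, your stationary-phase plan is carried out at \emph{real} $x$ (the stationary points of your phase are the real solutions of $y+2tu_0(y)=x$) and purports to give \eqref{somalt} \emph{pointwise}. No such pointwise limit exists where $\ell\geq 1$: in the multivalued region $u^\e(t,x)$ oscillates without converging pointwise, the convergence is only weak, and the paper proves (end of Section \ref{rational}) that the limit cannot even be strong in $L^2_{\rm loc}$ there. A pointwise asymptotic expansion of $u^\e$ would necessarily retain non-decaying oscillatory cross terms, and extracting the weak limit from it requires a multiphase averaging analysis --- this is essentially the inverse-scattering route of Miller--Wetzel \cite{MW16}, which is long and needs restrictive (Klaus--Shaw) hypotheses precisely because of this difficulty. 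So the core identity is asserted, not proved, and the proposed mechanism for proving it is unsound as stated.

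What the paper does instead for rational $u_0=P/Q$ requires no asymptotic analysis whatsoever: it evaluates the $\e$-free formula \eqref{ZDexplicit} exactly. The operator $G$ acts on rational Hardy functions as multiplication by $y$ plus the rank-one term $\frac{1}{2i\pi}I_+(f)$, and $T_{u_0}$ acts as multiplication by $u_0$ minus a finite-rank correction supported at the poles $p_j$; hence $f_{t,x}=(G+2tT_{u_0}-x{\rm Id})^{-1}(\Pi u_0)$ is an explicit rational function whose membership in $L^2_+(\R)$ forces its numerator to vanish at the $N+1$ zeros of $y-x+2tu_0(y)$ lying in the upper half-plane. This produces a linear system \eqref{system} for the unknown constants; Cramer's rule plus a Cauchy--Vandermonde determinant identity (Lemma \ref{det}) give $\Pi u(t,x)=-\lambda(t,x)$ with $2t\,\lambda(t,x)=\sum_\alpha y_{2\alpha}(t,x)-\sum_j p_j-x$. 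The only ``dynamical'' input is a perturbation/Cauchy--Riemann argument identifying \emph{which} roots of the degree-$(2N+1)$ polynomial equation move into the upper half-plane when $x$ acquires a small positive imaginary part (the even-indexed ones); letting $\mathrm{Im}(x)\to 0^+$ and using the root--coefficient relation \eqref{relxpy} then yields the alternating sum \eqref{somalt}. If you replace your stationary-phase step by this algebraic computation, the rest of your argument goes through.
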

\begin{center}
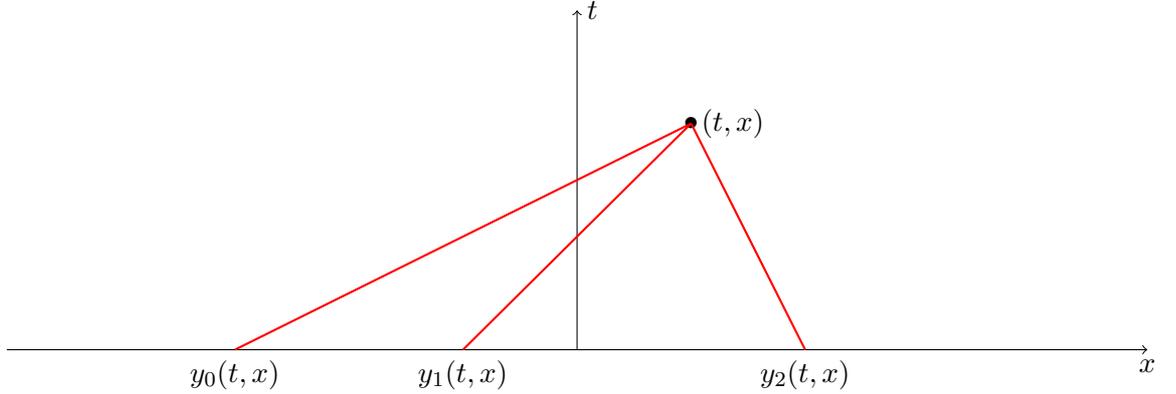
\begin{figure}
\begin{tikzpicture}[scale=1.5]
[scale=1.5]
\draw (5,0) node[below] {$x$};
\draw [->] (-5,0)--(5,0);
\draw (0,3) node[right] {$t$};
\draw [->] (0,0)--(0,3);
\draw (1,2) node[right]{$(t,x)$};
\draw (1,2) node{$\bullet$};
\draw [red][thick](-3,0)--(1,2);
\draw [red][thick](-1,0)--(1,2);
\draw [red][thick](2,0)--(1,2);
\draw (-3,0) node[below]{$y_0(t,x)$};
\draw (-1,0) node[below]{$y_1(t,x)$};
\draw (2,0) node[below]{$y_2(t,x)$};
\end{tikzpicture}
\caption{The crossing of characteristics}
\end{figure}
\end{center}
{\bf Remarks.}
\begin{enumerate}
\item Formula \eqref{somalt} was proved by Miller--Wetzel \cite{MW16} -- see also Miller--Xu \cite{MX11} --- in the special case of a rational Klaus--Shaw initial potential, and by Gassot \cite{Ga21, Ga23} in the special case of a general bell shaped initial potential with periodic boundary conditions, using inverse spectral theory for the Lax operator. Here we use a different approach and we have essentially no constraint on the initial datum $u_0$. 
\item As soon as several characteristics are crossing at a point $(t,x)$, the function  $u=ZD[u_0]$ given by \eqref{somalt} is not a weak solution of the inviscid Burgers--Hopf equation
$$\partial_tu+\partial_x (u^2)=0$$
in a neighborhood of $(t,x)$, hence the convergence of $u^\e $ to $u$ cannot be strong in $L^2_{\rm loc}$, confirming the strong oscillations detected by numerical simulations.
We refer to the end of section \ref{rational} for more detail about this lack of compactness. 
\item Formula \eqref{somalt} was introduced by Y.~Brenier in \cite{YB81, YB83, YB84} to approximate the entropic solution \cite{K69, S99} of the inviscid Burgers--Hopf equation. This formula  has the following weak version. For every test function $\phe$ on $\R$, 
\begin{equation}\label{weaksomalt} 
\int_\R ZD[u_0](t,x)\, \phe (x)\, dx =\int_\R \phe (y+2tu_0(y))u_0(y)(1+2tu_0'(y))\, dy\ .
\end{equation}
\end{enumerate}
A striking consequence of Theorem \ref{state:somalt} is the following maximum principle.
\begin{corollary}\label{maxprinciple}
For every $u_0\in L^\infty (\R )\cap L^2(\R )$,  for every $t\in \R $, 
$${\rm ess \, inf}u_0\leq {\rm ess \, inf} ZD[u_0](t)\leq {\rm ess \, sup} ZD[u_0](t)\leq {\rm ess \, sup} u_0\ .$$
\end{corollary}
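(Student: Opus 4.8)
The plan is to prove the inequality first for smooth data, where the explicit formula \eqref{somalt} is available, and then to transfer it to general data by approximation, using the stability property of Theorem \ref{main}(2). Assume first that $u_0$ satisfies the hypotheses of Theorem \ref{state:somalt} and fix $t\neq 0$ (the case $t=0$ being trivial, since then $ZD[u_0](0)=u_0$). Fix $x$ in a connected component $\Omega$ of $K_t(u_0)^c$, so that \eqref{somalt} holds with simple solutions $y_0<y_1<\cdots <y_{2\ell}$ of $y+2tu_0(y)=x$. The key observation is that each solution satisfies $u_0(y_k)=(x-y_k)/(2t)$, so the quantities $a_k:=u_0(y_k)$ are \emph{strictly monotone} in $k$: decreasing when $t>0$ and increasing when $t<0$. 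Writing $S=ZD[u_0](t,x)=\sum_{k=0}^{2\ell}(-1)^k a_k$ and grouping the terms in the two ways
$$S=a_0-\sum_{j=1}^{\ell}(a_{2j-1}-a_{2j})\ ,\qquad S=\sum_{j=0}^{\ell-1}(a_{2j}-a_{2j+1})+a_{2\ell}\ ,$$
each parenthesis has a definite sign, so for $t>0$ one gets $a_{2\ell}\leq S\leq a_0$ and for $t<0$ one gets $a_0\leq S\leq a_{2\ell}$; in either case $S$ lies between two of the sampled values $u_0(y_k)$. Since these belong to $[\,{\rm ess\,inf}\,u_0,\,{\rm ess\,sup}\,u_0\,]$, this yields ${\rm ess\,inf}\,u_0\leq ZD[u_0](t,x)\leq {\rm ess\,sup}\,u_0$ for a.e.\ $x$, as $K_t(u_0)$ has measure $0$.

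Next I would remove the regularity assumption. Given $u_0\in L^\infty(\R)\cap L^2(\R)$ with $m:={\rm ess\,inf}\,u_0$ and $M:={\rm ess\,sup}\,u_0$, set $u_0^\delta:=u_0*\rho_\delta$ for a standard nonnegative mollifier $\rho_\delta\in C_c^\infty(\R)$. Then $u_0^\delta$ is $C^\infty$ and tends to $0$ at infinity together with its derivatives (a consequence of $u_0\in L^2(\R)$ and the compact support of $\rho_\delta$ and $\rho_\delta'$), it converges to $u_0$ strongly in $L^2(\R)$ with $\Vert u_0^\delta\Vert_{L^\infty}\leq \max(|m|,|M|)$, and, because convolution with a probability density is an averaging, it satisfies $m\leq u_0^\delta\leq M$ pointwise. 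By the previous step, $m\leq ZD[u_0^\delta](t,x)\leq M$ for a.e.\ $x$.

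Finally I would pass to the limit $\delta\to 0$. By Theorem \ref{main}(2), $ZD[u_0^\delta](t)\to ZD[u_0](t)$ weakly in $L^2(\R)$. Since the set $\{f\in L^2(\R):m\leq f\leq M\text{ a.e.}\}$ is convex and strongly closed, it is weakly closed, so the weak limit $ZD[u_0](t)$ also satisfies $m\leq ZD[u_0](t)\leq M$ a.e., which is exactly the asserted chain of inequalities.

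I expect the only genuinely delicate point to be the monotonicity-plus-telescoping argument of the first step: it is what converts the oscillatory-looking alternating sum \eqref{somalt} into a quantity trapped between two sampled values of $u_0$, and it is the heart of why a maximum principle survives the crossing of characteristics. The remaining steps are routine; the only things to verify are that the mollified data meet the hypotheses of Theorem \ref{state:somalt} while respecting the bounds $[m,M]$, and that a two-sided $L^\infty$ constraint is preserved under weak $L^2$ limits.
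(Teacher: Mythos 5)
Your proposal is correct and follows essentially the same route as the paper: prove the bound for $C^1$ data from formula \eqref{somalt}, using that $u_0(y_k(t,x))=(x-y_k(t,x))/(2t)$ makes the sequence $k\mapsto u_0(y_k(t,x))$ monotone so that the alternating sum is trapped between two sampled values, then mollify and invoke Theorem \ref{main}(2) together with the weak closedness of the convex set $\{f: m\leq f\leq M\ \text{a.e.}\}$. Your telescoping groupings simply make explicit the step the paper dispatches with ``Consequently,'' so there is no substantive difference.
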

Contrary to what is known about the inviscid limit of the Burgers--Hopf equation \cite{K69, S99}, where the maximum principle is inherited from the parabolic structure, the above maximum principle is surprising, because similar estimates do not hold for the solution $u^\e $ of the Benjamin--Ono equation for a given $\e >0$. In fact, for periodic boundary conditions, one can even prove that the Benjamin--Ono flow map may lose up to $1/2$ derivatives in the $C^\alpha $ regularity (see section 4 of \cite{GKT21}).
\s
Another consequence of Theorem \ref{state:somalt} is the following local smoothing property.
\begin{corollary}\label{smoothing}
For every $u_0\in L^\infty (\R )\cap L^2(\R )$,  for every $t\in [-T,T]$, the distributional derivative $2t\partial_xZD[u_0](t)$ is a measure, with a bounded total variation on every finite interval of $\R$.
In particular, for every $t\ne 0$, for every $s<\frac 12$, $ZD[u_0](t)\in H^s_{\rm loc}(\R )$.
\end{corollary}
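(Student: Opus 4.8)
The plan is to deduce Corollary~\ref{smoothing} directly from the explicit formula \eqref{somalt} of Theorem~\ref{state:somalt}, treating first the smooth case where $u_0\in C^1$ and then removing this assumption by the approximation/stability property (2) of Theorem~\ref{main}. The central observation is that the map $y\mapsto y+2tu_0(y)$, call it $\Phi_t$, is a $C^1$ map whose derivative is $1+2tu_0'(y)$, and that formula \eqref{somalt} expresses $ZD[u_0](t,\cdot)$ as an alternating sum over the preimages $\Phi_t^{-1}(x)$. The function $ZD[u_0](t,\cdot)$ is bounded (by the maximum principle, Corollary~\ref{maxprinciple}) but generically discontinuous across $K_t(u_0)$, where the number $2\ell+1$ of sheets jumps; the key is to control the size of these jumps and the variation of each smooth branch.

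First I would work on a fixed connected component $\Omega$ of $K_t(u_0)^c$, where the sheet count $2\ell+1$ is locally constant and each $y_k(t,\cdot)$ is a $C^1$ branch of $\Phi_t^{-1}$. On such a component, differentiating \eqref{somalt} and using $\partial_x y_k=1/\Phi_t'(y_k)=1/(1+2tu_0'(y_k))$ gives
\begin{equation}\label{smoothingderiv}
\partial_x ZD[u_0](t,x)=\sum_{k=0}^{2\ell}(-1)^k\,\frac{u_0'(y_k(t,x))}{1+2tu_0'(y_k(t,x))}\ .
\end{equation}
Multiplying by $2t$ and writing $\tfrac{2tu_0'}{1+2tu_0'}=1-\tfrac{1}{1+2tu_0'}$, I would recognize that each term is the pushforward of an elementary quantity: by the change of variables $x=\Phi_t(y)$, the measure $2t\,u_0'(y)\,dy$ on the $y$-line pushes forward to $2t\,\partial_xZD[u_0](t,\cdot)$, up to the alternating signs. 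The total variation of this pushforward over a finite $x$-interval is therefore bounded by the total variation of $y\mapsto 2tu_0(y)$ over the (finite union of) preimage $y$-intervals, which is finite since $u_0$ is bounded and the preimages have finite length on any bounded set. This is the quantitative estimate I want.

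The remaining work is to account for the jumps across $K_t(u_0)$. As $x$ crosses a critical value, two neighbouring sheets $y_j,y_{j+1}$ coalesce at a point $y_*$ where $\Phi_t'(y_*)=0$; because they carry opposite signs $(-1)^j=-(-1)^{j+1}$, their contributions to $ZD[u_0](t,x)=\sum(-1)^ku_0(y_k)$ cancel in the limit, so $ZD[u_0](t,\cdot)$ extends \emph{continuously} across each isolated fold (the boundary term in the weak formulation \eqref{weaksomalt} vanishes). Hence $2t\,\partial_x ZD[u_0](t)$ has no singular part supported on $K_t(u_0)$ beyond what the pushforward already records, and the cleanest route is to prove the claim in the weak form: for every test function $\phe$, integrate \eqref{weaksomalt} by parts to get $\int 2t\,ZD[u_0](t,x)\phe'(x)\,dx=-\int \phe(\Phi_t(y))\,2t\,u_0'(y)\,dy$ plus a boundary contribution that telescopes to zero, exhibiting $2t\,\partial_xZD[u_0](t)$ as a measure with total variation controlled by $\int_{\Phi_t^{-1}(I)}2|t|\,|u_0'(y)|\,dy\le 4|t|\,\|u_0\|_{L^\infty}\cdot(\text{number of sheets})$ on any finite interval $I$. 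The passage from the $C^1$ case to general $u_0\in L^\infty\cap L^2$ then follows by approximating $u_0$ by mollified data $u_0^\delta$, applying part (2) of Theorem~\ref{main} to get weak convergence of $ZD[u_0^\delta](t)\to ZD[u_0](t)$, and using lower semicontinuity of total variation under weak-$*$ convergence of the derivatives. Finally, a locally-$BV$ function lies in $H^s_{\rm loc}$ for every $s<\tfrac12$ by the standard embedding $BV_{\rm loc}\hookrightarrow H^s_{\rm loc}$, giving the stated regularity for $t\ne0$.

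I expect the main obstacle to be the rigorous treatment of the jumps at $K_t(u_0)$: one must verify that no uncontrolled singular (Cantor-type or jump) part of the measure is created at critical values, and that the number of sheets $2\ell+1$ stays locally bounded so that the pushforward estimate is genuinely finite on bounded intervals. Showing that $\Phi_t$ has finitely many folds over any compact $x$-interval, and that the alternating signs force exact cancellation of the potentially singular boundary terms in \eqref{weaksomalt}, is the delicate point; the weak formulation \eqref{weaksomalt} is the tool that makes this transparent, since it never requires differentiating across the critical set.
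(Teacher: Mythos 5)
Your overall architecture is the same as the paper's: treat $C^1$ data first via the weak formulation \eqref{weaksomalt} and an integration by parts, then pass to general $u_0\in L^\infty(\R)\cap L^2(\R)$ by mollification together with property (2) of Theorem \ref{main} and lower semicontinuity of the total variation, and conclude with the embedding $BV_{\rm loc}\subset H^s_{\rm loc}$, $s<\frac12$. But your key quantitative step has a genuine gap, and it breaks exactly where the corollary has content. You bound the total variation of $2t\,\partial_xZD[u_0](t)$ on a finite interval $I$ by $\int_{\Phi_t^{-1}(I)}2|t|\,|u_0'(y)|\,dy$, i.e. by the total variation of $2tu_0$ over the preimage of $I$ under $\Phi_t: y\mapsto y+2tu_0(y)$, and then claim this is $\leq 4|t|\,\Vert u_0\Vert_{L^\infty}\cdot(\text{number of sheets})$ ``since $u_0$ is bounded and the preimages have finite length''. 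That inequality is false: boundedness of a $C^1$ function on an interval of finite length controls $\int u_0'$, not $\int |u_0'|$, so the variation on a single sheet can be arbitrarily large compared to $\Vert u_0\Vert_{L^\infty}$ (and the sheet count over a compact interval need not be bounded either). More fatally, no repaired bound of this type can be made uniform along a mollification $u_0^\delta$ of a general datum: if $u_0\in L^\infty\cap L^2$ is not locally $BV$, then $\int|(u_0^\delta)'|\to\infty$ and the fold/sheet counts blow up as $\delta\to 0$, so lower semicontinuity of the total variation has no finite uniform bound to pass to the limit. Since for $C^1$ data the statement is essentially already contained in Theorem \ref{state:somalt}, the general case is the whole point, and your argument collapses there.

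The missing idea is the exact cancellation that you actually write down but never exploit: $\frac{2tu_0'}{1+2tu_0'}=1-\frac{1}{1+2tu_0'}$, combined with the fact that $(-1)^k$ is precisely the sign of $1+2tu_0'(y_k(t,x))$. Pushing forward $(1+2tu_0'(y))\,dy$ under $\Phi_t$ gives the density $\sum_k(-1)^k=1$ identically, while pushing forward Lebesgue measure $dy$ gives a \emph{positive} measure; this is what the paper's integration by parts in \eqref{weaksomalt} produces, namely the identity \eqref{derivativeZD},
\begin{equation*}
2t\,\partial_xZD[u_0](t)=1-\mu_{2tu_0}\ ,\qquad \int_\R\phe\,d\mu_{2tu_0}:=\int_\R \phe\bigl(y+2tu_0(y)\bigr)\,dy\ .
\end{equation*}
The crucial point is that $\mu_{2tu_0}([-R,R])$ is the Lebesgue measure of $\Phi_t^{-1}([-R,R])$, hence at most $2(R+2|t|\Vert u_0\Vert_{L^\infty})$: a bound involving only $R$, $|t|$ and $\Vert u_0\Vert_{L^\infty}$, with no derivative of $u_0$ and no sheet count. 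This bound is uniform under mollification, and both sides of the identity pass to the limit (the left side by property (2) of Theorem \ref{main}, the right side by dominated convergence), which is how the paper reaches all $u_0\in L^\infty\cap L^2$. In short: do not estimate the alternating sum by absolute values; the alternating signs are exactly what convert the a priori uncontrolled sum of Jacobians into $1$ minus a pushforward of Lebesgue measure, whose local mass is controlled by $\Vert u_0\Vert_{L^\infty}$ alone.
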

\s
Finally, the combination of Theorem \ref{main} is and Theorem \ref{state:somalt} allows us to calculate $ZD[u_0]$ for discontinuous functions $u_0$, for instance if $u_0$ is the characteristic function of a finite interval. See section \ref{disc} below. 
\subsection{Structure of the paper.} This paper is organized as follows. In section 2, we prove Theorem \ref{main} by using an explicit formula recently derived for the initial value problem of the Benjamin--Ono on the line \cite{G23}. Assuming the initial datum $u_0$ to be $L^\infty$, it is possible to pass to the limit in this formula as $\e $ tends to $0$ and to obtain a formula for the zero--dispersion limit $ZD[u_0](t)$, using the Toeplitz operator of symbol $u_0$ acting on the Hardy space $L^2_+(\R)$,  see formula \eqref{ZDexplicit} below. In section 3, we transform formula \eqref{ZDexplicit} in the  case of a rational initial datum $u_0$ into a linear system on $\C^{N+1}$, where $2N$ is the degree of the denominator of $u_0$, and we obtain the formula \eqref{somalt} in this case. We also prove that the  weak limit to $ZD[u_0](t)$ is not strong in the case of three crossing characteristics. In section 4, we prove Theorem \ref{state:somalt} in its whole generality, and we infer Corollary \ref{maxprinciple} and Corollary \ref{smoothing}. Finally, in section 5, we describe the zero--dispersion limit $ZD[u_0]$ when $u_0$ is the characteristic function of the interval $]-1,1[$, and we prove that the map $t\mapsto ZD[\ .\ ](t)$ is not a semigroup of transformations of $L^2(\R )\cap L^\infty (\R)$. 
\section{Proof of Theorem \ref{main}}
 We first recall the notation from \cite{G23}.
Here $D$ denotes $-i\partial_x$, $\Pi $ denotes the orthogonal projector ${\bf 1}_{D\geq 0}$ from $L^2(\R )$ onto the Hardy space $$L^2_+(\R ):=\{ f\in L^2(\R ):  \forall \xi <0, \hat f(\xi )=0\} \ ,$$
where $\hat f$ denotes the Fourier transform. We recall that $L^2_+(\R )$ identifies to holomorphic fonctions $f$ in the upper half plane with 
$$\sup_{y>0}\int_\R |f(x+iy)|^2\, dx\ <+\infty \ .$$
We denote by $G$ the adjoint operator, on $L^2_+(\R )$, of the multiplication by $x$. The domain of $G$ consists of those functions $f$ in $L^2_+(\R )$ such that the restriction of $\hat f$ to $]0,+\infty [$ belongs to the Sobolev space $H^1$. Furthermore, for any function $f\in L^2_+(\R )$ such that the restriction of $\hat f$ 
to $]0,1[$ belongs to $H^1(]0,1[)$, we set 
$$I_+(f)=  \hat f(0^+)\ .$$
Finally, for $b\in L^\infty (\R )$ $T_b$ denotes the Toeplitz operator defined on $L^2_+(\R )$ by $T_b(f)= \Pi (bf)$. This operator is bounded, with $\Vert T_b\Vert =\Vert b\Vert_{L^\infty}$, and is selfadjoint if $b$ is real valued.
\vskip0.25cm
From \cite{G23}, we know that, if $w_0\in L^\infty (\R )\cap L^2(\R )$ and is real valued, the solution of the initial value problem for the Benjamin--Ono equation,
\begin{equation}\label{BO}
\partial_sw+\partial_x(w^2)=\partial_x|D|w\ ,\ w(0,x)=w_0(x)
\end{equation} 
is given by 
$$w(s,x)=\Pi w(s,x)+\overline{\Pi w(s,x)}, x\in \R \ ,$$
where, for every complex number $x$ with ${\rm Im}(x)>0$, 
\begin{equation}\label{explicitBO}
\Pi w (s,x)=\frac{1}{2i\pi} I_+[(G-2 s(D-T_{w_0}) -x{\rm Id})^{-1}(\Pi w_0)]\ .
\end{equation}
We recall \cite{G23} that the resolvent in the right hand side of  formula \eqref{explicitBO}
is well defined. Indeed, on the other hand, the operator $G-2sD$ has the following dissipation property,
$$\forall f\in {\rm Dom}(G-2sD)\ ,\  {\rm Im}\la (G-2sD)f\vert f\ra \leq 0\ ,$$
and is maximal, in the sense that $G-2sD-x{\rm Id}:{\rm Dom}(G-2sD)\to L^2_+(\R )$ is invertible for every complex number $x$ with ${\rm Im}(x)>0$.
On the other hand, the  operator $2sT_{w_0}$ is bounded and self--adjoint. The claim therefore follows from standard perturbation theory.\s
From a simple scaling argument,  the solution $u^\e $ of \eqref{BOeps} is given by 
$$u^\e (t,x)=\e w (\e t ,x)\ ,$$
where $w (s,x)$ is the solution of \eqref{BO} with $w_0 (x)=\e^{-1}u_0(x)$.
Therefore we can write
$$u^\e (t,x)=\Pi u^\e (t,x)+\overline{\Pi u^\e (t,x)}\ ,\ x\in \R \ ,$$
where, for every complex number $x$ with ${\rm Im}(x)>0$, 
$$\Pi u^\e (t,x)=\frac{1}{2i\pi} I_+[(G-2\e tD+2t T_{u_0} -x{\rm Id})^{-1}(\Pi u_0)]\ .$$
On the other hand, recall that the $L^2$ norm of $u^\e (t)$ is independent of $t$, equal to the $L^2$ norm of $u_0$, so there exists a subsequence $\e_j$ tending to $0$ such that $u^{\e _j} (t)$ has a weak limit in $L^2(\R )$. We claim that all these weak limits  coincide.
\begin{lemma}\label{state:ZDexplicit}
For every $t\in \R$, $u^\e (t)$ converges weakly in $L^2(\R )$ to $u(t)$, with
$$u(t,x)=\Pi u (t,x)+\overline{\Pi u (t,x)}\ ,\ x\in \R\ ,$$
where, for every complex number $x$ with ${\rm Im}(x)>0$, 
\begin{equation}\label{ZDexplicit}
\Pi u (t,x)=\frac{1}{2i\pi} I_+[(G+2t T_{u_0} -x{\rm Id})^{-1}(\Pi u_0)]\ .
\end{equation}
\end{lemma}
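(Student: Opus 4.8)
The plan is to fix $t\in\R$ and a spectral parameter $x$ with ${\rm Im}(x)>0$, and to study the vector
$$h_\e:=(G-2\e tD+2tT_{u_0}-x\,{\rm Id})^{-1}(\Pi u_0)\in L^2_+(\R),$$
so that $\Pi u^\e(t,x)=\frac1{2i\pi}I_+(h_\e)$. Since $D$ and $T_{u_0}$ are self--adjoint, ${\rm Im}\,\la(G-2\e tD+2tT_{u_0})f\,|\,f\ra={\rm Im}\la Gf|f\ra\le0$, so each $A_\e:=G-2\e tD+2tT_{u_0}$ is dissipative and $\|h_\e\|_{L^2}\le\|\Pi u_0\|_{L^2}/{\rm Im}(x)$ uniformly in $\e$; the same bound holds for the limiting resolvent $h_0:=(G+2tT_{u_0}-x\,{\rm Id})^{-1}(\Pi u_0)$, well defined by the maximality argument recalled above. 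Everything reduces to proving $I_+(h_\e)\to I_+(h_0)$ for each such $x$: by boundedness of point evaluation on the Hardy space, any weak--$L^2$ subsequential limit $v$ of $u^\e(t)$ satisfies $\Pi v(t,x)=\lim\frac1{2i\pi}I_+(h_\e)$, and identifying this with $\frac1{2i\pi}I_+(h_0)=\Pi u(t,x)$ forces $v=u(t)$, yielding the full weak convergence.

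First I would establish the weak convergence $h_\e\rightharpoonup h_0$. Using the uniform bound, extract a weak limit $h$ along a subsequence and test the resolvent equation $(A_\e-x\,{\rm Id})h_\e=\Pi u_0$ against functions $\phi$ whose Fourier transform lies in $C_c^\infty(]0,+\infty[)$; these lie in the domains of $G^*$ and $D$ and form a core for $(A_0-x\,{\rm Id})^*=G^*+2tT_{u_0}-\bar x$. The only $\e$--dependent term is $-2\e t\la h_\e|D\phi\ra$, which tends to $0$ because $h_\e$ is bounded and $D\phi\in L^2$. Passing to the limit gives $\la h-h_0,(A_0-x\,{\rm Id})^*\phi\ra=0$ on a core, whence $h=h_0$; uniqueness of the limit gives $h_\e\rightharpoonup h_0$.

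The crux is the passage to the limit in $I_+$, which is \emph{not} continuous on $L^2_+(\R)$. The device is an explicit formula for $I_+$ composed with the resolvent of the unbounded part $P_\e:=G-2\e tD$. Writing $(P_\e-x\,{\rm Id})f=g$ on the Fourier side as the first--order ODE $i\pa_\xi\hat f-(2\e t\xi+x)\hat f=\hat g$ on $]0,+\infty[$ and selecting the $L^2$ solution, one finds
$$I_+\big((P_\e-x\,{\rm Id})^{-1}g\big)=i\int_0^{+\infty}e^{i(\e t\eta^2+x\eta)}\,\hat g(\eta)\,d\eta=:L^\e_x(g).$$
Because the quadratic phase has modulus one, the Riesz representative $\ell^\e_x$ of $L^\e_x$ has squared norm $\int_0^{+\infty}e^{-2{\rm Im}(x)\eta}d\eta=1/(2{\rm Im}(x))$, uniformly in $\e$, and dominated convergence gives $\|\ell^\e_x-\ell^0_x\|_{L^2}\to0$. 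Treating $2tT_{u_0}$ as a bounded perturbation keeps $\Pi u_0-2tT_{u_0}h_\e$ in $L^2_+(\R)$, so the identity $h_\e=(P_\e-x\,{\rm Id})^{-1}(\Pi u_0-2tT_{u_0}h_\e)$ holds and yields $I_+(h_\e)=L^\e_x(\Pi u_0)-2t\,L^\e_x(T_{u_0}h_\e)$.

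Finally I would let $\e\to0$ in this identity. The first term converges to $L^0_x(\Pi u_0)$ since $\ell^\e_x\to\ell^0_x$ in norm. For the second, split $L^\e_x(T_{u_0}h_\e)=\la T_{u_0}h_\e,\ell^0_x\ra+\la T_{u_0}h_\e,\ell^\e_x-\ell^0_x\ra$: the first summand converges by the weak convergence $T_{u_0}h_\e\rightharpoonup T_{u_0}h_0$, and the second is $\le\|T_{u_0}h_\e\|\,\|\ell^\e_x-\ell^0_x\|\to0$. Hence $I_+(h_\e)\to L^0_x(\Pi u_0)-2t\,L^0_x(T_{u_0}h_0)=I_+(h_0)$, the last equality being the same identity at $\e=0$. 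The main obstacle is precisely the discontinuity of $I_+$ under weak --- and even strong --- $L^2$ convergence, compounded by the unboundedness of $D$; it is overcome by the explicit ODE representation, which turns $I_+\circ(P_\e-x\,{\rm Id})^{-1}$ into integration against the uniformly bounded, strongly convergent kernels $e^{i(\e t\eta^2+x\eta)}$.
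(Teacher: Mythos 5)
Your proof is correct, and it reaches \eqref{ZDexplicit} by a genuinely different route from the paper, even though both arguments share the same skeleton (reduction to pointwise convergence of $\Pi u^\e(t,x)$ in the upper half-plane via the resolvent vector) and both rest on the same explicit Fourier-side solution of the first-order ODE --- your functional $L^\e_x$ is exactly the paper's formula for $\hat h_\e(\xi)$ evaluated at $\xi=0^+$. The paper restricts to ${\rm Im}(x)>2|t|\Vert u_0\Vert_{L^\infty}$, where a Neumann series combined with the explicit free resolvent gives \emph{strong} resolvent convergence; it then deduces the convergence of $I_+(f_\e)$ from the uniform $H^1(]0,1[)$ bound on $\hat f_\e$ furnished by the ODE, and finally recovers the whole half-plane ${\rm Im}(x)>0$ by analytic continuation. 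You instead work directly at every $x$ with ${\rm Im}(x)>0$: dissipativity of the full operator $A_\e$ (correct, since $D$ and $T_{u_0}$ are self-adjoint) gives the uniform bound $({\rm Im}\,x)^{-1}$ with no largeness assumption; you obtain only \emph{weak} resolvent convergence $h_\e\rightharpoonup h_0$, by testing against the core $\{\phi:\hat\phi\in C_c^\infty(]0,+\infty[)\}$ of the adjoint --- note that concluding $h=h_0$ from orthogonality to $(A_0-x{\rm Id})^*\phi$ also requires the surjectivity of $(A_0-x{\rm Id})^*$, which follows from the maximality recalled before the lemma and deserves a line; and you compensate for the weaker mode of convergence by the $L^2$-norm convergence of the kernels $e^{i(\e t\eta^2+x\eta)}$ representing $I_+\circ(P_\e-x{\rm Id})^{-1}$, so that pairing them with the weakly convergent vectors $T_{u_0}h_\e$ passes to the limit. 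What your route buys is the elimination of the analytic-continuation step and of the restriction to large ${\rm Im}(x)$, making the argument self-contained at each fixed spectral parameter; what the paper's route buys is brevity and a stronger intermediate statement (strong rather than weak convergence of the resolvent vectors), with the $H^1(]0,1[)$ trace bound playing the role of your dual-kernel computation.
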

\begin{proof} Since $u\in L^2(\R )\mapsto \Pi u(x)\in \C$ is a continuous linear form for every complex number $x$ with ${\rm Im}(x)>0$, it is enough to characterize $\Pi u(t,x)$ for ${\rm Im}(x)>0$, and even for ${\rm Im }(x)$ big enough, by analytic continuation. We observe that, for ${\rm Im} (x)>2|t| \Vert u_0\Vert_{L^\infty}$,  the resolvent of $G-2\e tD+2t T_{u_0}$ converges strongly to the resolvent of $G+2tT_{u_0}$ as $\e \to 0$. Indeed, by standard perturbation theory, it is enough to check that the resolvent of $G-2\e tD$ converges to  $G$, with a uniform bound by $({\rm Im} (x))^{-1}$ if ${\rm Im} (x)>0$, and this is immediate from the following explicit formula, 
$$\hat h_\e (\xi )=i\int_\xi ^\infty \hat f(\eta ){\rm e}^{i\e t(\eta ^2-\xi ^2)+ix(\eta -\xi )}\, d\eta \ .$$
for $h_\e :=(G-2\e t D-x)^{-1}f$.\\
 Furthermore, from the equation
$$i\partial_\xi \hat f_\e (\xi )-(2\e t\xi +x)\hat f_\e (\xi )=\hat g_\e(\xi )\ ,\ g_\e :=\Pi u_0-2tT_{u_0}f_\e \ ,\ \xi >0\ ,$$
we observe that the Fourier transform of
$$f_\e =(G-2\e tD+2tT_{u_0} -x{\rm Id})^{-1}(\Pi u_0)$$
is uniformly bounded in $H^1(]0,1[)$ as $\e $ tends to $0$. \\Consequently, if ${\rm Im} (x)>2|t| \Vert u_0\Vert_{L^\infty}$, we have
$$\Pi u^\e (t,x) \longrightarrow \frac{1}{2i\pi} I_+[(G+2t T_{u_0} -x{\rm Id})^{-1}(\Pi u_0)]\ .$$
Therefore, by analytic continuation, for every $x$ such ${\rm Im}(x)>0$, 
$$
\Pi u (t,x)=\frac{1}{2i\pi} I_+[(G+2t T_{u_0} -x{\rm Id})^{-1}(\Pi u_0)]\ .
$$
which is \eqref{ZDexplicit}. From  formula \eqref{ZDexplicit}, a similar argument allows us to prove properties (1) and (2) in Theorem \ref{main}.
\end{proof} 
\begin{remark}
\begin{enumerate}
\item Notice that it is not so easy to estimate the $L^2$ norm of $u(t)$ from formula \eqref{ZDexplicit}. However, we know that it is bounded by the $L^2$ norm of $u_0$ because of the $L^2$ conservation law for the Benjamin--Ono equation.
\item The above arguments also show that, for any convergent sequence $t_\e $ of real numbers, $u^\e (t_\e )$ is weakly convergent in $L^2$ to $u(\lim _{\e \to 0}t_\e)$. Consequently, for every $T>0$, $u^\e $ is uniformly convergent to $u$ in $C([-T,T], L^2_w(\R ))$, where $L^2_w(\R )$ is $L^2(\R )$ endowed with the weak topology.
\item If the initial datum $u_0$ is replaced by a sequence $u_0^\e $ strongly convergent to $u_0$, with a uniform bound in $L^\infty $, the above arguments show that $u^\e (t)$ is still weakly convergent to $ZD[u_0](t)$.
\end{enumerate}
\end{remark}
\section{Proof of Theorem \ref{state:somalt} for a rational initial datum}\label{rational}
 In this section, we prove formula \eqref{somalt} if  $u_0$ is a rational function with real coefficients, with no pole on the real line,
$$u_0(y)=\frac{P(y)}{Q(y)}\ ,$$
where, for some positive integer $N$,  $Q$ is a monic real polynomial of degree $2N$, and $P$ is a real polynomial of degree at most $2N-1$. With no loss of generality, we may assume --- by an approximation argument in formula \eqref{ZDexplicit}---, that $Q$ has only simple poles in the complex domain, so that
$$u_0(y)=\sum_{j=1}^N \frac{c_j}{y-p_j}+\frac{\overline c_j}{y-\overline p_j}\ ,\ {\rm Im}(p_j)>0\ ,\ \Pi u_0(y)=\sum_{j=1}^N \frac{\overline c_j}{y-\overline p_j}\ .$$
Set $u(t):=ZD[u_0](t)$. We calculate directly $u(t,x)=\Pi u(t,x)+\overline{\Pi u(t,x)}$ for $x\in \R$. Recall that, according to formula \eqref{ZDexplicit}, for ${\rm Im}(x)>0$, 
$$\Pi u(t,x)=\frac{1}{2i\pi} I_+[(G+2t T_{u_0} -x{\rm Id})^{-1}(\Pi u_0)]\ .$$
Notice that the algebraic equation $y+2tu_0(y)=x$
is equivalent to
$$yQ(y)-xQ(y)+2tP(y)=0\ ,$$
which is an equation of degree $2N+1$. Hence, if $x\in \R$,  it has an odd number $2\ell (t,x)+1$ of real zeroes. Discarding only a finite set of points $x$ for a given $t$, we may consider the  case where $\ell (t,x)=\ell \in \Z_{\geq 0}$ and where these zeroes are simple, ordered as 
$$y_0(t,x)<y_1(t,x)<\dots  <y_{2\ell}(t,x)\ ,$$
so that the derivative of $y+2tu_0(y)$  at $y=y_k(t,x)$ has the sign of $(-1)^k$. Denote the other zeroes of this equation in the complex domain by $$y_m(t,x), m=2\ell +1,\dots, 2N\ ,$$ with ${\rm Im}(y_{2p}(t,x)) >0$ and $y_{2p-1}(t,x)=\overline{y_{2p}(t,x)}$ if $p=\ell +1,\dots, N$. The Cauchy--Riemann equations and the implicit function theorem show that 
$$\frac{\partial {\rm Im}(y_k(t,x))}{\partial {\rm Im}(x)}=\frac{\partial {\rm Re}(y_k(t,x))}{\partial {\rm Re}(x)}=\frac{1}{1+2tu_0'(y_k(t,x))}$$
which has the sign of $(-1)^k$.
If $x$ is shifted into the upper half plane with a small imaginary part, we infer
$${\rm Im}(y_{2k}(t,x))>0\ ,\ {\rm Im}(y_{2p-1}(t,x))<0\ ,\  k=0,\dots ,N,\  p=1,\dots, N\ .$$
For such a complex number $x$, let us calculate $$f_{t,x}:=(G+2t T_{u_0} -x{\rm Id})^{-1}(\Pi u_0)\ .$$ It turns out that $f_{t,x}$ is a rational function. Indeed, for any function $f$ in $L^2_+(\R )$, we have
$$Gf(y)=yf(y)+\frac{1}{2i\pi}I_+(f)\ ,\ T_{u_0}f(y)=u_0(y)f(y)-\sum_{j=1}^N c_j\frac{f(p_j)}{y-p_j}\ .$$
The first identity is a consequence of the Fourier representation of $G$ and of the jump formula at $\xi =0$ for distributional derivatives. The second identity comes from the following standard property of Toeplitz operators with rational symbols,
$$T_{(x-p)^{-1}}(f)(x)=\frac{f(x)-f(p)}{x-p}\ ,\ {\rm Im}(p)>0\ .$$
Hence the equation on $f_{t,x}$ can be reformulated as 
$$(y-x+2tu_0(y))f_{t,x}(y)=u_0(y)+\lambda (t,x)+\sum_{j=1}^N \frac{\mu_j(t,x)}{y-p_j}\ ,$$
where $\lambda (t,x)$, $\mu_j(t,x), j=1,\dots, N$, are complex numbers such that the rational function
$$f_{t,x}(y)=\frac{u_0(y)+\lambda (t,x)+\sum_{j=1}^N \frac{\mu_j(t,x)}{y-p_j}}{y-x+2tu_0(y)}\ ,$$
belongs to the Hardy space. This imposes the cancellation of the numerator if $y$ is any  of the zeroes of the denominator in the upper half plane, and therefore leads to the linear system
$$u_0(y_{2k}(t,x))+\lambda (t,x)+\sum_{j=1}^N \frac{\mu_j(t,x)}{y_{2k}(t,x)-p_j}=0\ ,\ k=0,\dots ,N\ , $$
or, if $t\ne 0$,
\begin{equation}\label{system}
\lambda (t,x)+\sum_{j=1}^N \frac{\mu_j(t,x)}{y_{2k}(t,x)-p_j}=\frac{y_{2k}(t,x)-x}{2t}\ ,\ k=0,\dots ,N\ ,
\end{equation}
Then we have
$$\Pi u(t,x)=\frac{1}{2i\pi}I_+(f_{t,x})=-\lim_{y\to \infty}yf_{t,x}(y)\ ,$$
since  $f_{t,x}(y)$ is a rational function of $y$. In view of the expression of $f_{t,x}$, we conclude
$$\Pi u (t,x)=-\lambda (t,x)\ .$$
From Cramer's formulae for the system \eqref{system},
\begin{eqnarray*}
\lambda (t,x)&=&\frac{N(t,x)}{D(t,x)}\ ,\\
 D&:=&\left |\begin{array}{ccccc} 1& \frac{1}{y_0-p_1}& . &. &\frac{1}{y_0-p_N}\\
1&\frac{1}{y_2-p_1}& . &. &\frac{1}{y_2-p_N}\\ 
.&.&.&.&.\\
1&\frac{1}{y_{2N}-p_1}& . &. &\frac{1}{y_{2N}-p_N}
\end{array}  \right |\ ,\\
N&:=:&\frac{1}{2t}\left |\begin{array}{ccccc} y_0-x& \frac{1}{y_0-p_1}& . &. &\frac{1}{y_0-p_N}\\
y_2-x&\frac{1}{y_2-p_1}& . &. &\frac{1}{y_2-p_N}\\ 
.&.&.&.&.\\
y_{2N}-x&\frac{1}{y_{2N}-p_1}& . &. &\frac{1}{y_{2N}-p_N}
\end{array}  \right |\ .
\end{eqnarray*}
The following lemma follows from elementary manipulations on Cauchy and Vandermonde determinants.
\begin{lemma}\label{det}
Given complex numbers $z_0,\dots ,z_N, p_1,\dots ,p_N$ pairwise distinct, we have
\begin{eqnarray*}
\frac{\left |\begin{array}{ccccc} z_0& \frac{1}{z_0-p_1}& . &. &\frac{1}{z_0-p_N}\\
z_1&\frac{1}{z_1-p_1}& . &. &\frac{1}{z_1-p_N}\\ 
.&.&.&.&.\\
z_N&\frac{1}{z_{N}-p_1}& . &. &\frac{1}{z_{N}-p_N}
\end{array}  \right |}
{\left |\begin{array}{ccccc} 1& \frac{1}{z_0-p_1}& . &. &\frac{1}{z_0-p_N}\\
1&\frac{1}{z_1-p_1}& . &. &\frac{1}{z_1-p_N}\\ 
.&.&.&.&.\\
1&\frac{1}{z_{N}-p_1}& . &. &\frac{1}{z_{N}-p_N}
\end{array}  \right |}
\ =\ \sum_{\alpha =0}^N z_\alpha -\sum_{j=1}^N p_j\ .
\end{eqnarray*}
\end{lemma}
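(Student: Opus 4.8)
The plan is to read the quotient of determinants as a Cramer coefficient and then solve a rational interpolation problem. Writing $\mathbf{1}=(1,\dots,1)^{T}$, $v=(z_0,\dots,z_N)^{T}$ and $C_j=\left(\tfrac{1}{z_\alpha-p_j}\right)_{\alpha=0}^{N}\in\C^{N+1}$ for $j=1,\dots,N$, the denominator is $\det[\mathbf{1},C_1,\dots,C_N]$ and the numerator is $\det[v,C_1,\dots,C_N]$, the two matrices differing only in the first column. Since the $z_\alpha$ and the $p_j$ are pairwise distinct, the denominator is nonzero: any nontrivial combination $\beta+\sum_j\gamma_j/(z-p_j)$ equals a fraction whose numerator has degree at most $N$, hence has at most $N$ zeros and cannot vanish at the $N+1$ distinct nodes $z_0,\dots,z_N$, so $\{\mathbf{1},C_1,\dots,C_N\}$ is a basis of $\C^{N+1}$. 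By Cramer's rule the quotient equals the coefficient $\beta$ in the unique expansion $v=\beta\,\mathbf{1}+\sum_{j=1}^{N}\gamma_j C_j$.

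Next I would rewrite this vector identity componentwise as the interpolation condition $R(z_\alpha)=z_\alpha$ for every $\alpha=0,\dots,N$, where $R(z):=\beta+\sum_{j=1}^{N}\tfrac{\gamma_j}{z-p_j}$. Multiplying by $\prod_{j=1}^{N}(z-p_j)$ turns $(R(z)-z)\prod_{j=1}^N(z-p_j)$ into a polynomial $P(z)=(\beta-z)\prod_{j=1}^{N}(z-p_j)+\sum_{j=1}^{N}\gamma_j\prod_{k\neq j}(z-p_k)$ of degree exactly $N+1$ with leading coefficient $-1$, the only $z^{N+1}$ term coming from $-z\prod_j(z-p_j)$ because each $\gamma_j$-term has degree at most $N-1$. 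As $z_\alpha\neq p_j$ for all $\alpha,j$, the condition $R(z_\alpha)=z_\alpha$ is equivalent to $P(z_\alpha)=0$, so the $N+1$ distinct numbers $z_0,\dots,z_N$ are precisely the roots of $P$ and therefore $P(z)=-\prod_{\alpha=0}^{N}(z-z_\alpha)$.

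It then remains to read off $\beta$ by comparing the coefficient of $z^{N}$ in the two expressions for $P$. From the form $P(z)=(\beta-z)\prod_j(z-p_j)+\sum_j\gamma_j\prod_{k\neq j}(z-p_k)$ this coefficient equals $\beta+\sum_{j=1}^{N}p_j$ (the $\gamma_j$-terms, having degree at most $N-1$, again contribute nothing), while from $P(z)=-\prod_\alpha(z-z_\alpha)$ it equals $\sum_{\alpha=0}^{N}z_\alpha$. Equating the two gives $\beta=\sum_{\alpha=0}^{N}z_\alpha-\sum_{j=1}^{N}p_j$, which is exactly the asserted value of the quotient.

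I expect the proof itself to be short, the only care needed being the bookkeeping: verifying the nonvanishing of the denominator so that Cramer's rule applies and $R$ is well defined, and tracking the degrees so that the two coefficient comparisons are legitimate. A more computational alternative --- presumably the ``elementary manipulations on Cauchy and Vandermonde determinants'' hinted at in the statement --- is to evaluate both determinants in closed form from the Cauchy determinant formula, viewing the constant (resp.\ linear) first column as the limit $p_0\to\infty$ of a genuine Cauchy column $\tfrac{-p_0}{z_\alpha-p_0}$, and then to simplify the ratio directly; this route works but introduces heavier products of differences, so I would favour the interpolation argument above.
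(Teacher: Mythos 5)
Your proof is correct, but it follows a genuinely different route from the paper's. You read the quotient via Cramer's rule as the constant term $\beta$ of the rational function $R(z)=\beta+\sum_{j}\gamma_j/(z-p_j)$ solving the interpolation problem $R(z_\alpha)=z_\alpha$, and you extract $\beta$ by identifying $(R(z)-z)\prod_j(z-p_j)$ with $-\prod_\alpha(z-z_\alpha)$ and comparing coefficients of $z^N$. The paper instead expands both determinants along their first column, evaluates the $N\times N$ cofactors in closed form with the Cauchy determinant formula, reduces the ratio to a quotient of generalized Vandermonde determinants $V(zQ)/V(Q)$ with $Q(z)=\prod_j(z-p_j)$, and concludes from the fact that the linear functional $V$ annihilates both polynomials of degree at most $N-1$ and the polynomial $\prod_\alpha(z-z_\alpha)$. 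The two arguments ultimately pivot on the same algebraic identity --- the coefficient of $z^N$ in $zQ(z)-\prod_\alpha(z-z_\alpha)$ equals $\sum_\alpha z_\alpha-\sum_j p_j$ --- but your route bypasses the Cauchy determinant formula entirely and, as a bonus, explicitly establishes that the denominator is nonzero, a point the paper leaves implicit. (The computational alternative you sketch at the end, with a Cauchy column in the limit $p_0\to\infty$, is close in spirit to what the paper actually does, though the paper uses cofactor expansion rather than a limit.) One small point to tighten in your independence argument: ``numerator of degree at most $N$, hence at most $N$ zeros'' tacitly assumes the numerator is not the zero polynomial; to close this, observe that if $\beta\prod_j(z-p_j)+\sum_j\gamma_j\prod_{k\neq j}(z-p_k)$ vanished identically, then $\beta=0$ (coefficient of $z^N$) and each $\gamma_j=0$ (evaluate at $z=p_j$), contradicting nontriviality.
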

\begin{proof}
Using the formula for the Cauchy determinants,  we have
\begin{eqnarray*}
A:=\left |\begin{array}{ccccc} z_0& \frac{1}{z_0-p_1}& . &. &\frac{1}{z_0-p_N}\\
z_1&\frac{1}{z_1-p_1}& . &. &\frac{1}{z_1-p_N}\\ 
.&.&.&.&.\\
z_N&\frac{1}{z_{N}-p_1}& . &. &\frac{1}{z_{N}-p_N}
\end{array}  \right | &=&\sum_{\alpha =0}^N (-1)^\alpha z_\alpha D_\alpha 
\\
B:=\left |\begin{array}{ccccc} 1& \frac{1}{z_0-p_1}& . &. &\frac{1}{z_0-p_N}\\
1&\frac{1}{z_1-p_1}& . &. &\frac{1}{z_1-p_N}\\ 
.&.&.&.&.\\
1&\frac{1}{z_{N}-p_1}& . &. &\frac{1}{z_{N}-p_N}
\end{array}  \right |&=&\sum_{\alpha =0}^N (-1)^\alpha D_\alpha 
\end{eqnarray*}
with 
\begin{eqnarray*}
D_\alpha &:=&\prod _{\substack{\beta \ne \alpha, \gamma\ne \alpha \\ \beta <\gamma } }(z_\beta -z_\gamma) \prod_j (z_\alpha -p_j)  \Delta \ ,\\
\Delta &:=& \frac{\prod_{j<k}(p_k-p_j)}{\prod_{\beta ,j}(z_\beta -p_j)}\ .
\end{eqnarray*}
Consequently, we are led to evaluate the following quotient of Vandermonde determinants,
\begin{eqnarray*}
\frac AB&=&\frac{V(R)}{V(Q)}\ ,\ R(z):=zQ(z)\ ,\ Q(z)=\prod_{j=1}^N (z-p_j)\ ,\\
V(P)&:=&\left |\begin{array}{ccccc} P(z_0)& 1& z_0 &. &z_0^{N-1}\\
P(z_1)&1& z_1 &. &z_1^{N-1}\\ 
.&.&.&.&.\\
P(z_N)&1& z_N&. &z_N^{N-1}
\end{array}  \right |\ .
\end{eqnarray*}
We notice that the linear form $V$ cancels on polynomials of degree  at most $N-1$, and on the polynomial $\tilde P $ defined as   $$\tilde P(z):=\prod_{\alpha =0}^N(z-z_\alpha)\ .$$
Since 
\begin{eqnarray*}
R(z)-\tilde P(z)&=&zQ(z)-\tilde P(z)=\left (\sum_{\alpha =0}^N z_\alpha -\sum_{j=1}^N p_j\right )z^N +\C_{\leq N-1}[z]\\
&=&\left (\sum_{\alpha =0}^N z_\alpha -\sum_{j=1}^N p_j\right )Q(z) +\C_{\leq N-1}[z]\ ,
\end{eqnarray*}
we infer that
$$\frac{V(R)}{V(Q)}=\frac{V(R-\tilde P)}{V(Q)}=\sum_{\alpha =0}^N z_\alpha -\sum_{j=1}^N p_j\ .$$
\end{proof}
Using lemma \ref{det}, we have
\begin{equation}\label{lambda}
\lambda (t,x)=\frac{1}{2t}\left (\sum_{\alpha =0}^{N} y_{2\alpha}(t,x) -\sum_{j=1}^Np_j-x  \right )\ .
\end{equation}
In view  of the algebraic equation $$yQ(y)-xQ(y)+2tP(y)=0\ ,\ Q(y)=\prod_{j=1}^N (y-p_j)(y-\overline p_j)\ ,$$ we have 
\begin{equation}\label{relxpy}
x+\sum_{j=1}^N (p_j+\overline p_j)=\sum_{\alpha =0}^{2N}y_\alpha \ .
\end{equation}
Now we make $x$ tend to the real line, so that   $y_{k}(t,x)$ is real for \break $k=0,1,\dots, 2\ell $, and  $y_{2p-1}(t,x)=\overline{y_{2p}(t,x)}$ if $p=\ell +1,\dots, N\ .$
Consequently, 
\begin{eqnarray*}
u(t,x)&=&-(\lambda (t,x)+\overline {\lambda (t,x)})\\
&=&\frac{1}{2t}\left ( 2x+\sum_{j=1}^N (p_j+\overline p_j)- 2\sum_{\alpha =0}^\ell y_{2\alpha }(t,x)- \sum_{\beta =2\ell +1}^{2N} y_\beta  (t,x)\right )\ ,\\
&=&\frac{1}{2t}\left (x+\sum_{\gamma =1}^{\ell}  y_{2\gamma -1}(t,x)-\sum_{\alpha =0}^{\ell} y_{2\alpha }(t,x)\right )\ ,\\
&=&\sum_{k=0}^{2\ell} (-1)^k u_0(y_k(t,x))\ .
\end{eqnarray*}
The proof of formula \eqref{somalt} when $u_0$ is rational is complete.
\s
Let us end this section by an elementary observation about formula \eqref{somalt}. We claim that, if $\ell =1$, this alternate sum of solutions of the inviscid Burgers equation is not a solution of the inviscid Burgers--Hopf equation. Indeed, let us argue by contradiction. Assume $\ell =1$
 and that $u$ satisfies the inviscid Burgers--Hopf equation  in some neighborhood $V$ of $(t,x)$ in $\R ^2$. Writing $v_k(t,x):=u_0(y_k(t,x))$ for $k=0,1,2$, we observe that $\partial _tv_k+\partial_x(v_k^2)=0$, hence 
$$\partial_x[(v_0-v_1+v_2)^2-v_0^2+v_1^2-v_2^2]=0\ $$
in $V$. We can reformulate the above identity as
$$\partial_x [(v_1-v_0)(v_1-v_2)]=0$$
on $V$. Since we are dealing with real analytic functions, this identity extends to the whole connected component of $V$ in the domain $W$ characterized by $\ell =1$. In other words, the function $(v_1-v_0)(v_1-v_2)$ is not depending on $x$ in this domain. Since this function tends to $0$ as $(t,x)$ tends to the boundary of $W$, we conclude that $(v_1-v_0)(v_1-v_2)$ is identically $0$, which is a contradiction since $y_k(t,x)+2tv_k(t,x)=x$, while $y_0(t,x)<y_1(t,x)<y_2(t,x)$.

\section{Proof of Theorem \ref{state:somalt}, of the maximum principle and of the local smoothing property}\label{C1}
We now consider the case where $u_0\in L^2(\R )$ is continuously differentiable, with $|u(x)|+|u'(x)|\to 0$ as $x\to \infty$. 
Using a standard mollifier, we may approximate $u_0$ in $L^2(\R )\cap C^1_b(\R )$ by a sequence $u_0^\delta $ of functions in $H^s(\R )$ for any $s\in \R$.
Since $L^2$ rational functions are dense in $H^2(\R )$, we may assume that $u_0^\delta $ is rational.
Let $t\in \R$, and let $K_t(u_0)$ be the set of critical values of the function $$f_t: y\in \R \mapsto y+2tu_0(y)\in \R\ .$$ By the Sard theorem, $K_t(u_0)$ has zero Lebesgue measure.  Furthermore, since $1+2tu_0'(y)\to 1$ as $y\to \infty $, the set of critical points of $f_t$  is compact, so that its image $K_t(u_0)$ by $f_t$ is compact too. Let $\Omega $ be any connected component of $K_t(u_0)^c $.
Let $\ell $ be the nonnegative integer such that, for every $x\in \Omega $, the equation $y+2tu_0(y)=x$ has $2\ell +1$ solutions, denoted by
$$y_0(t,x)<\dots <y_{2\ell}(t,x)\ .$$
Let $\omega $ be any open subinterval of $\Omega $ such that $\overline \omega $ is compact. For $\delta $ small enough, $\omega $ does not meet $K_t(u_0^\delta )^c $, and, for every $x\in \omega$,  the equation $$y+2tu_0^\delta (y)=x$$ has exacly $2\ell +1$ solutions, 
$$y_0^\delta (t,x) <\dots <y_{2\ell}^\delta (t,x)\ .$$ 
Furthermore, for every $k\in \{0,1,\dots, 2\ell\}$,  $y_k^\delta (t,x)\to y_k(t,x)$ as $\delta \to 0$. From section \ref{rational}, we know that, for $x\in \omega $,
$$ZD[u_0^\delta ](t,x)=\sum_{k=0}^{2\ell}(-1)^k u_0^\delta (y_k^\delta (t,x))\ .$$
Passing to the weak limit in $L^2(\omega )$ as $\delta \to 0$, we infer, for almost every $x\in \omega $,
$$ZD[u_0 ](t,x)=\sum_{k=0}^{2\ell}(-1)^k u_0 (y_k (t,x))\ .$$
Since $\omega $ is an arbitrary relatively compact subinterval of $\Omega $, this proves formula \eqref{somalt}. 
\s
We now come to the proof of Corollary \ref{maxprinciple}. First consider the case where $u_0$ is continuously differentiable with $|u_0(x)|+|u'_0(x)|\to 0$ as $x\to \infty$, as in section \ref{C1}.
Let us come back to formula \eqref{somalt},
$$u(t,x)=\sum_{k=0}^{2\ell} (-1)^k u_0(y_k(t,x))$$
where $y_0(t,x)<\dots <y_{2\ell}(t,x)$ are the solutions of $y+2tu_0(y)=x$. In view of the monotonicity of the sequence $k\mapsto y_k(t,x)$, we infer the monotonicity of  the sequence 
$k\mapsto u_0(y_k(t,x))$. Consequently, we have 
$$\min_{0\leq k\leq 2\ell}  u_0(y_k(t,x))\leq \sum_{k=0}^{2\ell} (-1)^k u_0(y_k(t,x))\leq \max_{0\leq 2\ell} u_0(y_k(t,x))$$
and finally 
$$\min _{y\in \R}u_0(y)\leq u(t,x)\leq \max_{y\in \R}u_0(y)\ ,$$
which is the claimed maximum principle.
\\
The general case of $u_0\in L^\infty (\R )\cap L^2(\R )$ follows by applying a standard mollifier to  $u_0$ and using property (2) in Theorem \ref{main}.
\s
Finally, let us prove Corollary \ref{smoothing}. Let us first assume that $u_0$ is continuously differentiable with $|u_0(x)|+|u'_0(x)|\to 0$ as $x\to \infty$.
 Using the weak version \eqref{weaksomalt} of formula \eqref{somalt}, we have, for every $\phe \in C^1(\R )$ with compact support,
\begin{eqnarray*}
\la 2t \partial_xZD[u_0](t), \phe \ra &=& -2t \int_\R u_0(y)\phe '(y+2tu_0(y))(1+2tu_0'(y))\, dy\\
&=&2t \int_\R u_0'(y)\phe (y+2tu_0(y))\, dy\\
&=&\int_\R \left [\frac d{dy}\left (\int_{-\infty}^{y+2tu_0(y)}\phe (s)\, ds\right ) -\phe (y+2tu_0(y))\right ]\, dy\\
&=&\int_\R \phe (s)\, ds -\int_\R \phe (y+2tu_0(y))\, dy\ .
\end{eqnarray*}
By mollifying $u_0$, this statement still holds if $u_0\in L^\infty (\R )\cap L^2(\R )$.
We infer that, for every $u_0\in L^\infty (\R )\cap L^2(\R )$,
\begin{equation}\label{derivativeZD}
2t \partial_xZD[u_0](t)=1-\mu_{2tu_0}\ ,\  \int _\R\phe (y)\, d\mu_f(y):=\int_\R \phe (y+f(y))\, dy \ .
\end{equation}
 Notice that  $\mu_f([-R,R])\leq 2(R+\| f\|_\infty )$. This proves the first claim of  Corollary \ref{smoothing}. The second claim is a consequence of the Sobolev embedding $$BV_{\rm loc}(\R )\subset H^s_{\rm loc}(\R )$$ for every $s<\frac 12$.
 \begin{remark}. Equation \eqref{derivativeZD} implies a one--sided Lipschitz condition for $ZD[u_0](t)$. This property  is well-known for entropic solutions of the inviscid Burgers--Hopf equation \cite{O57, BO88}.
 \end{remark}
\section{A special case and the lack of semigroup property}\label{disc}
In this section, we consider the case where $u_0$ is the characteristic function of a finite interval.
We are going to calculate $ZD[u_0]$ by approximating $u_0$ by a sequence of compactly supported $C^1$ 
functions. An alternative method would be to use formula \eqref{derivativeZD} above.
\begin{theorem}\label{step}
Let $u_0$ be the characteristic function of $]-1,1[$. Then $ZD[u_0]$ can be described as follows.
If $t\in ]0,1]$,
\begin{eqnarray*}
ZD[u_0](t,x)=\begin{cases} 0 & {\rm if}\ x\in ]-\infty ,-1]\cup ]2t+1,+\infty[\\
\frac{x+1}{2t} & {\rm if}\ x\in ]-1, 2t-1]\\
1 & {\rm if}\ x\in ] 2t-1, 1]\\
1-\frac{x-1}{2t} & {\rm if}\ x\in ] 1, 2t+1]\ .
 \end{cases}
 \end{eqnarray*}
 If $t\in ]1,+\infty [$,
\begin{eqnarray*}
ZD[u_0](t,x)=\begin{cases} 0 & {\rm if}\ x\in ]-\infty ,-1]\cup ]2t+1,+\infty[\\
\frac{x+1}{2t} & {\rm if}\ x\in ]-1, 1]\\
\frac 1t  & {\rm if}\ x\in ]1, 2t-1]\\
1-\frac{x-1}{2t} & {\rm if}\ x\in ] 2t-1, 2t+1]\ .
 \end{cases}
 \end{eqnarray*}
 Furthermore, $ZD[u_0](-t,-x)=ZD[u_0](t,x)$.
\end{theorem}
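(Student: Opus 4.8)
The plan is to reduce everything to the $C^1$ formula \eqref{somalt} via the stability property (2) of Theorem~\ref{main}, and then to carry out a careful count of the preimages of the characteristic map $f_t(y)=y+2tu_0(y)$. Fix $t>0$. First I would approximate $u_0={\bf 1}_{]-1,1[}$ by a sequence of even, compactly supported $C^1$ functions $u_0^\delta$ with $0\le u_0^\delta\le 1$, equal to $1$ on $[-1+\delta,1-\delta]$, supported in $[-1-\delta,1+\delta]$, non-decreasing on $\R_-$ and non-increasing on $\R_+$, choosing the derivative on each transition to be unimodal and steep enough that $1+2t(u_0^\delta)'$ changes sign exactly twice on the right transition. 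Then $u_0^\delta\to u_0$ strongly in $L^2(\R)$ with $\Vert u_0^\delta\Vert_{L^\infty}\le 1$, so property (2) of Theorem~\ref{main} yields $ZD[u_0^\delta](t)\rightharpoonup ZD[u_0](t)$ weakly in $L^2(\R)$.

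Next I would describe the graph of $f_t^\delta(y):=y+2tu_0^\delta(y)$. Since $(u_0^\delta)'\ge 0$ to the left of the bump, $f_t^\delta$ is strictly increasing on $]-\infty,1-\delta[$; on the right transition the steep decrease produces exactly one fold, so $f_t^\delta$ is increasing, then decreasing, then increasing --- an ``N--shape'' with a single local maximum whose value tends to $2t+1$ and a single local minimum whose value tends to $1$ as $\delta\to 0$. Reading off the value--ranges of the three monotone branches, I would count for each regular value $x$ the number $2\ell+1$ of solutions of $f_t^\delta(y)=x$ and read off the alternating sum \eqref{somalt}: the last branch contributes a root with $u_0^\delta=0$, the flat middle a root with $u_0^\delta=1$, the increasing left transition a root with $u_0^\delta(y)=(x-y)/(2t)\to(x+1)/(2t)$ (as $y\to -1$), and the decreasing fold branch a root with $u_0^\delta(y)=(x-y)/(2t)\to(x-1)/(2t)$ (as $y\to 1$). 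The decisive point is the position of the local minimum value (tending to $1$) relative to the top of the left--transition range (tending to $2t-1$): they coincide exactly when $t=1$, which is precisely the origin of the dichotomy between $t\in]0,1]$ and $t\in]1,+\infty[$ in the statement.

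The step I expect to be the main obstacle is this bookkeeping and its passage to the limit $\delta\to 0$. One must check that the spurious critical values introduced by smoothing the corners of the bump cluster at $\{1,2t+1\}$, hence do not alter the count for a.e.\ $x$, and that along each surviving branch $u_0^\delta(y_k^\delta(t,x))$ converges to the announced value ($0$, $1$, or the affine expressions above). Granting this, $ZD[u_0^\delta](t,x)$ converges for a.e.\ $x$ to the piecewise function in the statement; since these functions are uniformly bounded by $1$ (maximum principle, Corollary~\ref{maxprinciple}) and supported in the fixed interval $[-2,2t+2]$, dominated convergence upgrades this to strong convergence in $L^2(\R)$, and uniqueness of the weak limit identifies $ZD[u_0](t)$ with the stated function. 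Finally, the relation $ZD[u_0](-t,-x)=ZD[u_0](t,x)$ follows from the evenness of $u_0$: the substitution $y\mapsto -y$ turns $y+2(-t)u_0(y)=-x$ into $y+2tu_0(y)=x$ and reverses the order of the roots, so that, $2\ell$ being even, the signs in \eqref{somalt} are preserved; alternatively it may be checked directly on the two displayed formulas.
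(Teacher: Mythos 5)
Your proposal is correct and follows essentially the same route as the paper's own proof: the paper likewise approximates ${\bf 1}_{]-1,1[}$ by $C^1$ bump functions, invokes property (2) of Theorem \ref{main} together with formula \eqref{somalt}, counts the roots of $y+2tu_0^\delta(y)=x$ branch by branch (with the single fold on the decreasing transition producing the triple-root region, and the comparison of the critical value near $1$ with the top $2t-1$ of the left-transition range giving the $t\lessgtr 1$ dichotomy), and passes to the limit using the uniform bound $0\leq ZD[u_0^\delta]\leq 1$ from the maximum principle. The only cosmetic difference is the final symmetry claim, which the paper deduces from the relation $u^\e (t,x)=u^\e (-t,-x)$ satisfied by the Benjamin--Ono solutions themselves, whereas you obtain it by root reversal in \eqref{somalt}; both arguments are valid.
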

\begin{center}
\begin{figure}
\begin{tikzpicture}[scale=1.5]
\draw (5,0) node[below] {$x$};
\draw [->] (-5,0)--(5,0);
\draw (0,3) node[right] {$u_0(x)$};
\draw [->] (0,0)--(0,3);
\draw[red][thick](-1,1)--(1,1);
\draw[red][thick](-5,0)--(-1,0);
\draw[red][thick](1,0)--(5,0);
\draw[dotted](-1,0)--(-1,1);
\draw[dotted](1,0)--(1,1);
\draw (-1,0) node[below] {$-1$};
\draw (1,0) node[below] {$1$};
\draw (0,1.3) node[right] {$1$};
\end{tikzpicture}
\caption{The datum at $t=0$}
\end{figure}
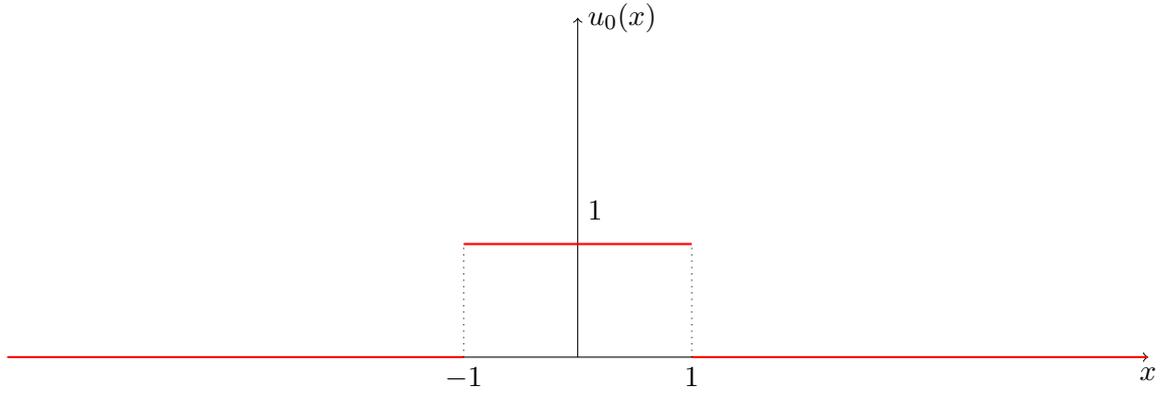
\end{center}
\begin{center}
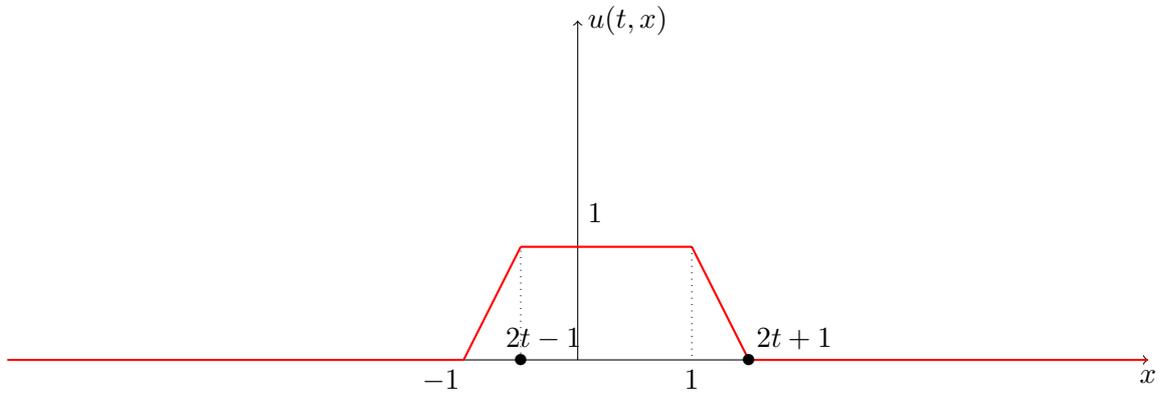
\begin{figure}
\begin{tikzpicture}[scale=1.5]
\draw (5,0) node[below] {$x$};
\draw [->] (-5,0)--(5,0);
\draw (0,3) node[right] {$u(t,x)$};
\draw [->] (0,0)--(0,3);
\draw[red][thick](-5,0)--(-1,0);
\draw[red][thick](-1,0)--(-0.5,1);
\draw[red][thick](-0.5,1)--(1,1);
\draw[red][thick](1,1)--(1.5,0);
\draw[red][thick](1.5,0)--(5,0);
\draw[dotted](-0.5,0)--(-0.5,1);
\draw[dotted](1,1)--(1,0);
\draw (-1.2,0) node[below] {$-1$};
\draw (-0.3,0) node[above] {$2t-1$};
\draw (-0.5,0) node {$\bullet$};
\draw (1.5,0) node {$\bullet$};
\draw (1,0) node[below] {$1$};
\draw (1.9,0) node[above] {$2t+1$};
\draw (0,1.3) node[right] {$1$};
\end{tikzpicture}
\caption{The weak limit  at $t\in ]0,1[$}
\end{figure}
\end{center}
\begin{center}
\begin{figure}
\begin{tikzpicture}[scale=1.5]
\draw (5,0) node[below] {$x$};
\draw [dotted] (0,1)--(1,1); 
\draw (0,1) node[left] {$1$};
\draw [->] (-5,0)--(5,0);
\draw (0,3) node[right] {$u(1,x)$};
\draw [->] (0,0)--(0,3);
\draw[red][thick](-5,0)--(-1,0);
\draw[red][thick](-1,0)--(1,1);
\draw[red][thick](1,1)--(3,0);
\draw[red][thick](3,0)--(5,0);
\draw[dotted](1,0)--(1,1);
\draw (-1,0) node[below] {$-1$};
\draw (1,0) node[below] {$1$};
\draw (3,0) node [below] {3};
\draw (3,0) node {$\bullet$};
\draw (1,0) node {$\bullet$};
\end{tikzpicture}
\caption{The weak limit   at $t=1$}
\end{figure}
\end{center}
\begin{center}
\begin{figure}
\begin{tikzpicture}[scale=1.5]
\draw (5,0) node[below] {$x$};
\draw [->] (-5,0)--(5,0);
\draw (0,3) node[right] {$u(t,x)$};
\draw [->] (0,0)--(0,3);
\draw[red][thick](-5,0)--(-1,0);
\draw[red][thick](-1,0)--(1,0.66);
\draw[red][thick](1,0.66)--(2,0.66);
\draw[red][thick](2,0.66)--(4,0);
\draw[red][thick](4,0)--(5,0);
\draw[dotted](2,0.66)--(2,0);
\draw[dotted](1,0.66)--(1,0);
\draw[dotted](0,0.66)--(1,0.66);
\draw (-1.2,0) node[below] {$-1$};
\draw (1,0) node[below] {$1$};
\draw (2,0) node[below] {$2t-1$};
\draw (4,0) node[below] {$2t+1$};
\draw (0,0.66) node[left] {$\frac 1t$};
\draw (2,0) node {$\bullet$};
\draw (4,0) node {$\bullet$};
\end{tikzpicture}
\caption{The weak limit    at $t\in ]1,+\infty [$}
\end{figure}
\end{center}
\begin{proof}
We use property (2) of Theorem \ref{main} and Theorem \ref{state:somalt}. We approximate $u_0$ by the following smooth function $u_0^\delta $ as $\delta \to 0^+$.
 \begin{eqnarray*}
 u_0^\delta (x)=\begin{cases}  0 &{\rm if} \ x\in ]-\infty ,-1-\delta ]\cup [1+\delta ,+\infty [\\
 1 &{\rm if} \ x\in [-1, 1] \end{cases}
 \end{eqnarray*}
 and $u_0^\delta $ is strictly increasing from $0$ to $1$ on the interval $[-1-\delta ,-1]$, strictly decreasing from $1$ to $0$ on the interval $[1,1+\delta ]$.
Let assume $t\in ]0,1]$. Given $x\in \R$, the solutions of the equation $y+2t u_0^\delta (y)=x$ are as follows.\\
If $x<-1-\delta$, the only solution is $y_0^\delta (t,x)=x$ and therefore $$ZD[u_0^\delta ](t,x)=u_0^\delta (y_0^\delta (t,x))=0\ .$$
If $-1 <x <2t-1$, there exists only one solution $y_0^\delta $, it is simple and it belongs to $]-1-\delta ,-1[$. Then
$$ZD[u_0^\delta ](t,x)=u_0^\delta (y_0^\delta (t,x))=\frac{x-y_0^\delta }{2t}\ .$$
If $2t-1<x<1$, there exists only one solution $y_0^\delta (t,x)=x-2t $, it is simple and it belongs to $]-1 ,1[$ . Then
$$ZD[u_0^\delta ](t,x)=u_0^\delta (y_0^\delta  (t,x))=1\ .$$
If $1+\delta <x<1+2t$ is outside  a set of measure $0$, there exist three simple solutions $y_0^\delta (t,x)=x-2t$, $y_1^\delta (t,x) \in ]1,1+\delta [$ and $y_2^\delta (t,x) =x$. Then
$$ZD[u_0^\delta ](t,x)=\sum_{k=0}^2 (-1)^k u_0^\delta (y_k^\delta (t,x))=1-\frac{x-y_1^\delta (t,x)}{2t}\ .$$
If $1+2t <x$, then there exists only one solution $y_0\delta (t,x)=x$ and 
$$ZD[u_0^\delta ](t,x)=u_0^\delta (y_0^\delta (t,x))=0\ .$$
Finally, passing to the limit as $\delta \to 0$ by keeping in mind that $$0\leq ZD[u_0^\delta ]\leq 1$$ in view of the maximum principle, we obtain the result.\\ The case $t>1$
 can be handled similarly. Finally, the formula $$ZD[u_0](-t,-x)=ZD[u_0](t,x)$$ follows from the fact that $u_0$ is even and therefore that the solution $u^\e $ of \eqref{BOeps} satisfies $u^\e (t,x)=u^\e (-t,-x)$.
\end{proof} 
We conclude this section by observing that the map $ZD $ does not satisfy the semigroup property. More precisely, we show on the above example that, in general,
$$ZD[ZD[u_0](t)](s)\ne ZD[u_0](t+s)\ .$$
Let us choose $t=1$ and $s>0$ in the above example. Then, according to Theorem \ref{step}, 
\begin{eqnarray*}
ZD[u_0](1,x)=\begin{cases} 0 & {\rm if}\ x\in ]-\infty ,-1]\cup ]3,+\infty[\\
\frac{x+1}{2} & {\rm if}\ x\in ]-1, 1]\\
1-\frac{x-1}{2} & {\rm if}\ x\in ] 1, 3]\ .
 \end{cases}
 \end{eqnarray*}
 Set $u_1(x):=ZD[u_0](1, x)$. This is a continuous, piecewise linear function, and we can easily approximate it by $C^1$ functions by smoothing it in neighbourhoods of $x=-1,1,3$. Given $s\in ]0,1[$, the function $y\mapsto y+2su_1(y)$ is continuous and strictly increasing, sending $]-\infty, -1[$ onto itself, $]-1, 1[$ onto $]-1,2s+1[$, $]1,3[$ onto 
 $]2s+1,3[$ and $]3,+\infty [$ onto itself. Consequently, we obtain, after passing to the limit,
 \begin{eqnarray*}
ZD[u_1](s,x)=\begin{cases} 0 & {\rm if}\ x\in ]-\infty ,-1[\cup ]3,+\infty[\\
\frac{x+1}{2(s+1)} & {\rm if}\ x\in ]-1, 2s+1[\\
\frac{3- x}{2(1-s)} & {\rm if}\ x\in ] 2s+1, 3]\\
0& {\rm if}\ x\in ] 3, +\infty[\ .
 \end{cases}
 \end{eqnarray*}
 \begin{center}
\begin{figure}
\begin{tikzpicture}[scale=1.5]
\draw (5,0) node[below] {$x$};
\draw [dotted] (0,1)--(2,1); 
\draw (0,1) node[left] {$1$};
\draw [->] (-5,0)--(5,0);
\draw (0,3) node[right] {$ZD[u_1](s,x)$};
\draw [->] (0,0)--(0,3);
\draw[red][thick](-5,0)--(-1,0);
\draw[red][thick](-1,0)--(2,1);
\draw[red][thick](2,1)--(3,0);
\draw[red][thick](3,0)--(5,0);
\draw[dotted](2,0)--(2,1);
\draw (-1,0) node[below] {$-1$};
\draw (2,0) node[below] {$2s+1$};
\draw (3,0) node [below] {3};
\draw (2,0) node {$\bullet$};
\draw (3,0) node {$\bullet$};
\end{tikzpicture}
\caption{The iteration  $ZD[ZD[u_0](1)](s)$ for $s\in ]0,1[$.}
\end{figure}
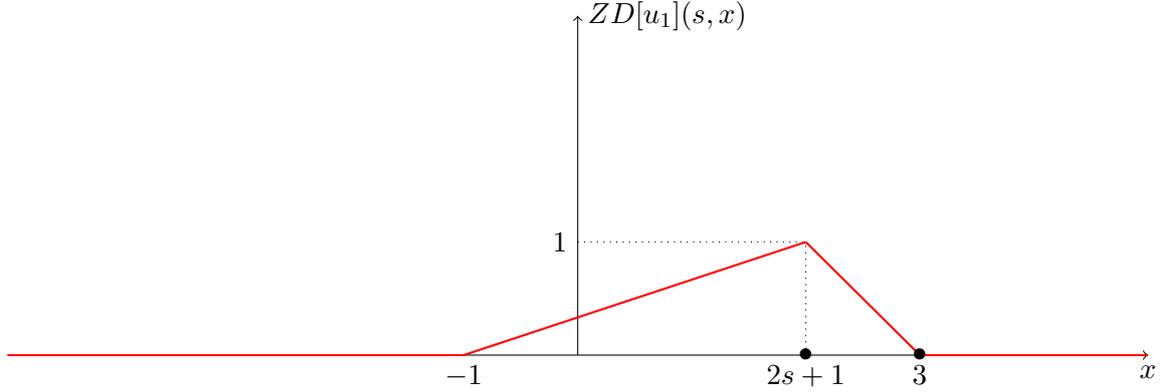
\end{center}
 Comparing with the expression of $ZD[u_0](1+s,x)$ from Theorem \ref{step}, we conclude that 
 $ZD[ZD[u_0](1)](s)\ne ZD[u_0](1+s)\ .$
 \begin{remark}
 As pointed to us by Y.~Brenier, this lack of semigroup property can also be proved by observing that the semigroup associated to the entropic solution \cite{K69, S99} of the inviscid Burgers--Hopf equation is deduced in \cite{YB81, YB83, YB84} from the map $ZD[\, .\,]$ through a Trotter formula. If the latter was a semigroup, it would coincide with the former, which is obviously wrong.
  \end{remark}

\end{document}